\theoremstyle{plain}
\newtheorem{theorem}{Theorem}[section]
\newtheorem{proposition}[theorem]{Proposition}
\newtheorem{lemma}[theorem]{Lemma}
\newtheorem{corollary}[theorem]{Corollary}
\theoremstyle{definition}
\newtheorem{remark}[theorem]{Remark}
\def\Im{\operatorname{Im}}
\def\Aut{\operatorname{Aut}}
\def\Ker{\operatorname{Ker}}
\def\Coker{\operatorname{Coker}}
\def\Int{\operatorname{Int}}
\def\even{\operatorname{even}}
\def\Hom{\operatorname{Hom}}
\begin{document}
\title[The level 2 mapping class group of a non-orientable surface]
{A minimal generating set of the level 2 mapping class group of a non-orientable surface}

\author{Susumu Hirose}
\address{Department of Mathematics,  
Faculty of Science and Technology, 
Tokyo University of Science, 
Noda, Chiba, 278-8510, Japan}
\email{hirose\_susumu@ma.noda.tus.ac.jp}

\thanks{The first author was supported by Grant-in-Aid for
Scientific Research (C) (No. 20540096),
Japan Society for the Promotion of Science. }

\author{Masatoshi Sato}
\address{Department of Mathematics Education, Faculty of Education,
Gifu University, 1-1 Yanagido, Gifu City, Gifu 501-1193, Japan}
\email{msato@gifu-u.ac.jp}

\thanks{The second author was supported by Grant-in-Aid for
Young Scientists (Start-up) (No. 24840023),
Japan Society for the Promotion of Science. }

\maketitle
\begin{abstract}
We construct a minimal generating set of the level 2 mapping class group
of a nonorientable surface of genus $g$,
and determine its abelianization for $g\ge4$.
\end{abstract}

\section{Introduction}
Let $N_g$ be a non-orientable closed surface of genus $g$, 
i.e. $N_g$ is a connected sum of $g$ real projective planes. 
The group $\mathcal{M}(N_g)$ of isotopy classes of diffeomorphisms 
over $N_g$ is called {\em the mapping class group\/} of $N_g$. 
Let $\cdot$ be  the mod $2$ intersection form on 
$H_1(N_g;\mathbb{Z}/2\mathbb{Z})$, and 
$\Aut(H_1(N_g;\mathbb{Z}/2\mathbb{Z}),\cdot)$ the group of automorphisms 
of $H_1(N_g;\mathbb{Z}/2\mathbb{Z})$ preserving the mod $2$ intersection form. 
McCarthy and Pinkall \cite{McC-Pin} showed that 
the homomorphism from $\mathcal{M}(N_g)$ to 
$\Aut(H_1(N_g;\mathbb{Z}/2\mathbb{Z}),\cdot)$ defined by 
the action of $\mathcal{M}(N_g)$ on $H_1(N_g;\mathbb{Z}/2\mathbb{Z})$ 
is surjective. 
The kernel $\Gamma_2(N_g)$ of this surjection is called 
{\em the level $2$ mapping class group of $\mathcal{M}(N_g)$\/}. 
In this paper, we construct a minimal generating set for $\Gamma_2(N_g)$,
and determine its abelianization.

Lickorish \cite{Lickorish1} showed that $\mathcal{M}(N_g)$ is generated by 
Dehn twists and $Y$-homeomorphisms,
and Korkmaz \cite{Korkmaz} determined its first homology group.
Furthermore, Chillingworth \cite{Chillingworth} found a finite system of 
generators for $\mathcal{M}(N_g)$. 
Birman and Chillingworth \cite{BC} obtained the finite system of generators 
by using the argument on the orientable two fold covering of $N_g$. 
The group $\mathcal{M}(N_g)$ is not generated by Dehn twists, 
namely, Lickorish \cite{Lickorish2} showed that the subgroup of 
$\mathcal{M}(N_g)$ generated by Dehn twists is an index $2$ subgroup of  
$\mathcal{M}(N_g)$. 
On the other hand, a $Y$-homeomorphism acts on 
$H_1(N_g ; \mathbb{Z}/2 \mathbb{Z})$ trivially, hence, 
the group $\mathcal{M}(N_g)$ is not generated by $Y$-homeomorphisms.  
In \cite{Szepietowski1}, Szepietowski proved that (Theorem 5.5) 
the level $2$ mapping class group $\Gamma_2(N_g)$ is generated 
by $Y$-homeomorphisms, 
and that (Corollary 5.6) $\Gamma_2(N_g)$ is generated by involutions. 
Therefore, $H_1(\Gamma_2(N_g);\mathbb{Z})$ is 
a $\mathbb{Z}/2\mathbb{Z}$-module. 

$Y$-homeomorphisms are defined as follows.
A simple closed curve $\gamma_1$ (resp. $\gamma_2$) in $N_g$ 
is {\em two-sided\/} (resp. {\em one-sided\/}) 
if the regular neighborhood of $\gamma_1$ (resp. $\gamma_2$) 
is an annulus (resp. M\"{o}bius band).  
For a one-sided simple closed curve $\mu$ and a two-sided 
simple closed curve $\alpha$ which intersect transversely in one point, 
let $K \subset N_g$ be a regular neighborhood of $\mu \cup \alpha$, 
which is a union of the tubular neighborhood of $\mu$ and 
that of $\alpha$ 
and then is homeomorphic to the Klein bottle with a hole. 
Let $M$ be a regular neighborhood of $\mu$. 
We denote by $Y_{\mu,\alpha}$ a homeomorphism over $N_g$ which is described 
as the result of pushing $M$ once along $\alpha$ keeping 
the boundary of $K$ fixed 
(see Figure~\ref{fig:Y-homeo}). 
\begin{figure}[hbtp]
\includegraphics[height=4cm]{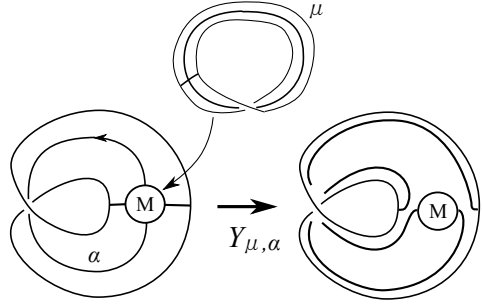}
\caption{$M$ with circle indicates a place where to attach a M\"obius band.}
\label{fig:Y-homeo}
\end{figure}
We call $Y_{\mu,\alpha}$ a $Y$-homeomorphism. 
For a two sided simple closed curve $\gamma$ on $N_g$,
we denote by $T_{\gamma}$ a Dehn twist about $\gamma$. 
Then $T_{\gamma}^2 \in \Gamma_2(N_g)$. 

For $I =\{i_1, \ldots, i_k \} \subset \{1,2,\cdots, g\}$, 
we define an oriented simple closed curve $\alpha_I$ 
as in Figure \ref{fig:alpha}. 
For short, we define 
$Y_{i_1; i_2, \ldots, i_k} = Y_{\alpha_{i_1}, 
\alpha_{\{i_1, i_2, \ldots, i_k\}}}$, 
$T_{i_1, \ldots, i_k} = T_{\alpha_{\{i_1, \ldots, i_k\}}}$. 

\begin{figure}[hbtp]
\centering
\includegraphics[height=2.7cm]{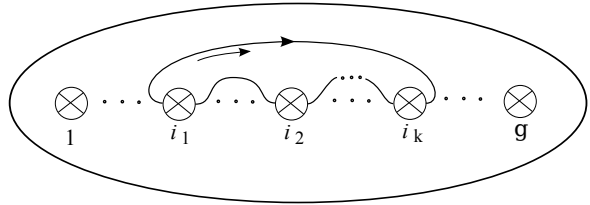}
\caption{The curve $\alpha_I$ for $I = \{ i_1, i_2, \ldots, i_k \}$. 
The small arrow beside $\alpha_I$ indicates the direction of 
the Dehn twist $t_{\alpha_I}$. }
\label{fig:alpha}
\end{figure}

Szepietowski  showed the following. 

\begin{theorem}\cite[Theorem 3.2, Remark 3.10]{Szepietowski2}\label{theorem:szepietowski}
When $g\ge4$, $\Gamma_2(N_g)$ is generated by the following 
two types of elements, 
\begin{enumerate}
\item $Y_{i;j}$ for $i\in\{1,\cdots,g-1\}$, $j\in\{1,\cdots,g\}$ and $i\ne j$,
\item $T^2_{i,j,k,l}$ for $i<j<k<l$.
\end{enumerate}
\end{theorem}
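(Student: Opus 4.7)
The starting point is Szepietowski's earlier theorem (Theorem 5.5 of \cite{Szepietowski1}) that $\Gamma_2(N_g)$ is generated by \emph{all} $Y$-homeomorphisms $Y_{\mu,\alpha}$. It therefore suffices to express every such generator as a word in the listed $Y_{i;j}$ and $T_{i,j,k,l}^{2}$. I would split the argument into a topological reduction, which brings an arbitrary pair $(\mu,\alpha)$ to a standard form $(\alpha_i,\alpha_J)$, and an algebraic reduction, which cuts the size of $J$ down to two.

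For the topological step, note that a regular neighborhood $K$ of $\mu\cup\alpha$ is a Klein bottle with one boundary circle, so the pair $(N_g,\mu\cup\alpha)$ is determined up to ambient homeomorphism by the homeomorphism type of $N_g\setminus K$. The change-of-coordinates principle, combined with the finite system of generators of $\mathcal{M}(N_g)$ due to Chillingworth, provides an $h\in\mathcal{M}(N_g)$ with $(h(\alpha_i),h(\alpha_J))=(\mu,\alpha)$ for some $J$ containing $i$; equivalently $Y_{\mu,\alpha}=h\,Y_{i;J\setminus\{i\}}\,h^{-1}$. Since $Y_{\mu,\alpha}\in\Gamma_2(N_g)$ and $\Gamma_2(N_g)$ is normal in $\mathcal{M}(N_g)$, one can decompose $h$ into generators of $\mathcal{M}(N_g)$ and use the identity $g\,Y_{\mu,\alpha}\,g^{-1}=Y_{g(\mu),g(\alpha)}$ at each step, so the conjugate only ever travels through other $Y$-homeomorphisms.

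The algebraic step is to show, by induction on $|J|$, that $Y_{i;j_1,\ldots,j_k}$ lies in the subgroup generated by the claimed elements. The case $k=1$ is tautological. For the inductive step, the key tool is a band-sum identity of the schematic form
\[
Y_{\mu,\,\alpha\,\#\,\beta}=T_\beta\,Y_{\mu,\alpha}\,T_\beta^{-1}\cdot(\text{squared-twist correction}),
\]
whose expansion inside $\Gamma_2(N_g)$ converts the task of passing from $\alpha_{\{i,j_1,\ldots,j_{k-1}\}}$ to $\alpha_{\{i,j_1,\ldots,j_k\}}$ into a product of $Y_{i;j_1,\ldots,j_{k-1}}$ and squares of Dehn twists along curves cobounding a four-holed sphere with $\alpha_J$.

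The hard part will be the final bookkeeping: verifying that every squared Dehn twist produced in these band-sum and conjugation manipulations is of the form $T_{i,j,k,l}^{2}$ with an index set of size four, rather than a squared twist along a longer or separating curve $\alpha_{J'}$. This is where a non-orientable analogue of the lantern relation, or a chain relation adapted to $N_g$, should enter, allowing any squared Dehn twist $T_{\alpha_{J'}}^{2}$ with $|J'|\neq 4$ to be re-expressed as a product of quadruple-index squared twists. Controlling this reduction systematically, and checking that the hypothesis $g\ge 4$ is precisely what is needed for such relations to be available, is the main technical point.
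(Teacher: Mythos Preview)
This theorem is not proved in the paper under review: it is quoted from Szepietowski \cite[Theorem~3.2, Remark~3.10]{Szepietowski2} and serves only as the input for the paper's own Theorem~\ref{proposition:generators}. There is therefore no proof in this paper against which to compare your proposal.

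That said, your outline has a genuine gap. In the ``topological step'' you write $Y_{\mu,\alpha}=h\,Y_{i;J\setminus\{i\}}\,h^{-1}$ for some $h\in\mathcal{M}(N_g)$ and then propose to decompose $h$ into generators of $\mathcal{M}(N_g)$, using $g\,Y_{\mu',\alpha'}\,g^{-1}=Y_{g(\mu'),g(\alpha')}$ at each stage so that ``the conjugate only ever travels through other $Y$-homeomorphisms.'' This is true but circular: each intermediate conjugate is again an \emph{arbitrary} $Y$-homeomorphism, which is exactly the kind of element you set out to reduce; nothing forces the intermediate pairs $(g(\mu'),g(\alpha'))$ to be of the standard form $(\alpha_i,\alpha_J)$ handled by your algebraic step. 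What one actually needs --- and this is the structure of Szepietowski's argument --- is to prove that the subgroup $G$ generated by the listed elements is \emph{normal} in $\mathcal{M}(N_g)$, by the finite check that conjugating each listed generator by each generator of $\mathcal{M}(N_g)$ lands back in $G$. Once $G$ is normal, any $Y$-homeomorphism, being conjugate to a standard one, lies in $G$, and the problem collapses. Your inductive reduction on $|J|$ and the appearance of squared twists as correction terms are in the right spirit, but the normality verification is the load-bearing step, not an application of change-of-coordinates to individual elements.
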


In this paper,
we reduce the number of generators of $\Gamma_2(N_g)$,
and show that it is minimal.

\begin{theorem}\label{proposition:generators}
When $g\ge4$, $\Gamma_2(N_g)$ is generated by the following 
two types of elements, 
\begin{enumerate}
\item $Y_{i;j}$ for $i\in\{1,\cdots,g-1\}$, $j\in\{1,\cdots,g\}$ and $i\ne j$,
\item $T^2_{1,j,k,l}$ for $j<k<l$.
\end{enumerate}
\end{theorem}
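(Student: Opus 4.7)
My plan is to start from Szepietowski's generating set in Theorem \ref{theorem:szepietowski} and show that every $T^2_{i,j,k,l}$ with $i>1$ is redundant in the presence of the $Y_{i;j}$'s together with the smaller family $\{T^2_{1,j,k,l}\mid 1<j<k<l\}$. The basic principle is that for any $f\in\Gamma_2(N_g)$ and any two-sided simple closed curve $\gamma$ one has $fT_\gamma^2 f^{-1}=T_{f(\gamma)}^2$, so it is enough to produce, for every quadruple $i<j<k<l$ with $i>1$, an element $\phi$ in the subgroup $H$ generated by our reduced set such that $\phi(\alpha_{\{1,j',k',l'\}})=\alpha_{\{i,j,k,l\}}$ for some triple $j'<k'<l'$. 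The conjugate $\phi T_{1,j',k',l'}^2\phi^{-1}$ then realises the missing generator inside $H$.

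The main work is therefore to build enough elements of $\Gamma_2(N_g)$ out of the $Y$-homeomorphisms $Y_{i;j}$ so that their natural action on the curves $\alpha_I$ can effectively permute the index $1$ into higher positions. Concretely, I would first record how a single $Y_{i;j}$ acts on a curve $\alpha_{\{1,a,b,c\}}$, by drawing the intersection pattern of $\alpha_{\{i,j\}}\cup\alpha_i$ with $\alpha_{\{1,a,b,c\}}$ and applying the definition of pushing a M\"obius band along $\alpha_{\{i,j\}}$; this should give, up to isotopy, another curve $\alpha_{I'}$ or a curve that differs from such a standard one by a product of further $Y_{*;*}$'s. Next I would assemble short words $\phi_i=Y_{i_1;j_1}^{\varepsilon_1}\cdots Y_{i_r;j_r}^{\varepsilon_r}$ whose combined action on $\alpha_{\{1,a,b,c\}}$ realises the replacement $1\mapsto i$. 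It is natural here to proceed index-by-index: first construct $\phi_2$ moving the index $1$ past $2$, and use it to eliminate all $T^2_{2,*,*,*}$; then construct $\phi_3$ eliminating all $T^2_{3,*,*,*}$ by reducing to the previous case; and so on up to $i=g-3$.

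Once this reduction is done, the claim follows from Szepietowski's theorem, because every $T^2_{i,j,k,l}$ originally appearing there is now rewritten as a word in the remaining generators. To argue that the resulting generating set is indeed minimal one must later examine $H_1(\Gamma_2(N_g);\mathbb{Z}/2\mathbb{Z})$ and verify that the images of the listed generators are $\mathbb{Z}/2$-linearly independent; I would defer this to the abelianisation computation promised in the abstract, noting that the cardinality of our set matches the $\mathbb{Z}/2$-dimension of the abelianisation.

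The hard part will clearly be the explicit curve-chasing in the second paragraph: finding manageable words $\phi_i$ whose action on the relevant $\alpha_I$ is controlled. The obstruction is that a single $Y_{i;j}$ can send a standard curve $\alpha_I$ to a rather twisted one, so careful use of commutation relations between disjointly supported $Y$-homeomorphisms, together with the identity $Y_{\mu,\alpha}^{-1}=Y_{\mu,\alpha^{-1}}$ and the braid-type relations among $Y_{i;j}$ already implicit in Szepietowski's work, will be needed to keep the combinatorics under control.
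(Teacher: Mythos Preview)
Your conjugation strategy has a fatal obstruction at the very first step. You want to find $\phi$ in the subgroup $H$ generated by the $Y_{i;j}$'s and the $T^{2}_{1,j,k,l}$'s with $\phi(\alpha_{\{1,j',k',l'\}})=\alpha_{\{i,j,k,l\}}$ for some $i>1$. But every generator of $H$ lies in $\Gamma_2(N_g)$, so $H\subset\Gamma_2(N_g)$ and every $\phi\in H$ acts \emph{trivially} on $H_1(N_g;\mathbb{Z}/2\mathbb{Z})$. Since $\{C_1,\dots,C_g\}$ is a $\mathbb{Z}/2\mathbb{Z}$-basis and $[\alpha_I]=\sum_{n\in I}C_n$, the curves $\alpha_{\{1,j',k',l'\}}$ and $\alpha_{\{i,j,k,l\}}$ have distinct mod~$2$ homology classes whenever $i>1$. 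Hence no element of $\Gamma_2(N_g)$, let alone of $H$, can carry one to the other. The inductive scheme ``move the index $1$ past $2$, then past $3$, \dots'' cannot be realised by level~$2$ elements; the ``hard part'' you identify is in fact impossible, not merely difficult.

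The paper avoids this obstruction by not trying to conjugate $T^{2}_{1,*,*,*}$ into $T^{2}_{j,k,l,m}$ inside $\Gamma_2(N_g)$. Instead it exploits a \emph{chain relation} in the full mapping class group. One takes the five Dehn twists $a=T_{1,j}$, $b=T_{j,k}$, $c=T_{k,l}$, $d=T_{l,m}$, $e=T_{1,j,k,l}$ along a chain in a genus-$2$ subsurface with boundary $B$, so that $(abcde)^6=T_B$. Braid relations rewrite $(abcde)^6$ as a product of fifteen conjugates of squared Dehn twists; one of these factors is precisely $abcd\,e^2\,(abcd)^{-1}=T^{2}_{j,k,l,m}$. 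The remaining fourteen factors are shown to lie in $G$ by first conjugating with the (non-level-$2$) twists $a,b,c,d$ and then correcting with suitable $Y$-homeomorphisms so that the resulting curve is of the form $\alpha_{1,*}$, $\alpha_{*,*}$ or $\alpha_{1,*,*,*}$; here one also uses $T_{p,q}^2=Y_{p;q}^{-1}Y_{q;p}$. Finally $T_B$ itself is the square of a product of $Y$-homeomorphisms, hence in $G$. Solving for $T^{2}_{j,k,l,m}$ gives the result. The key point is that the conjugating words $abcd$, $bcd$, \dots\ are \emph{not} in $\Gamma_2(N_g)$, so the homological obstruction above does not apply; only the full products of factors lie in $\Gamma_2(N_g)$.
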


\begin{corollary}\label{theorem:main}
When $g \geq 4$,
the set given in the above Theorem \ref{proposition:generators} 
is a minimal generating set for $\Gamma_2(N_g)$. 
\end{corollary}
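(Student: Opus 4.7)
The plan is to derive the corollary directly from the abelianization computation advertised in the abstract. The generating set in Theorem~\ref{proposition:generators} has cardinality $N:=(g-1)^2+\binom{g-1}{3}$, counting $(g-1)$ choices for $i$ and $(g-1)$ choices for $j\ne i$ in type (i), and $\binom{g-1}{3}$ choices of $j<k<l$ in $\{2,\ldots,g\}$ in type (ii). To establish that no smaller generating set exists, I would use the standard lower bound: for any group $G$, every generating set projects to a generating set of the abelianization $G^{\mathrm{ab}}=H_1(G;\mathbb Z)$, so the minimum number of generators of $G$ is at least the minimum number of generators of $H_1(G;\mathbb Z)$ as an abelian group.

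Since Szepietowski's Corollary~5.6 shows that $\Gamma_2(N_g)$ is generated by involutions, the introduction already records that $H_1(\Gamma_2(N_g);\mathbb Z)$ is a $\mathbb Z/2\mathbb Z$-module. Granting the computation promised in the abstract, namely $H_1(\Gamma_2(N_g);\mathbb Z)\cong(\mathbb Z/2\mathbb Z)^{N}$, the minimum number of generators of this abelianization is exactly $\dim_{\mathbb F_2}(\mathbb Z/2\mathbb Z)^{N}=N$. Combining this with the upper bound supplied by Theorem~\ref{proposition:generators} immediately yields that the given set is a minimal generating set, proving Corollary~\ref{theorem:main}.

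The real work — and the main obstacle — is therefore not in this corollary but in the abelianization calculation it invokes. Concretely one must show that the images of the $N$ generators from Theorem~\ref{proposition:generators} in $H_1(\Gamma_2(N_g);\mathbb Z)$ are $\mathbb F_2$-linearly independent. I would approach this by constructing sufficiently many $\mathbb F_2$-valued homomorphisms from $\Gamma_2(N_g)$ (for example, reductions of $\mathrm{Sp}$-type crossed homomorphisms, or homomorphisms derived from the action on $H_1(N_g;\mathbb Z/4\mathbb Z)$ or from the orientation double cover) that together separate the chosen generators. Once a family of $N$ such homomorphisms detecting the $N$ generators is in hand, upper-bounding $H_1$ by the Szepietowski generating relations of Theorem~\ref{theorem:szepietowski} and the reduction in Theorem~\ref{proposition:generators} will pin the abelianization down to $(\mathbb Z/2\mathbb Z)^{N}$, and the minimality of the generating set follows as above.
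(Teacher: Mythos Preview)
Your derivation of the corollary is correct and is exactly the paper's argument: both deduce minimality from Theorem~\ref{theorem:abel} via the projection to the abelianization, and your count $N=(g-1)^2+\binom{g-1}{3}$ indeed equals the paper's $\binom{g}{3}+\binom{g}{2}$. For the abelianization lower bound that you speculate about in the last paragraph, the paper does not use $\mathbb{Z}/4\mathbb{Z}$-actions or the double cover but rather a mod~$2$ Johnson homomorphism $\tau_1:\Gamma_2(N_g^*)\to (H^{\otimes3})^{S_3}$ constructed from the standard Magnus expansion.
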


In Section 2,
we prove Theorem \ref{proposition:generators}
and obtain an upper bound for the dimension of the $\mathbb{Z}/2\mathbb{Z}$-module
$H_1(\Gamma_2(N_g);\mathbb{Z})$. 

Broaddus-Farb-Putman \cite{Broaddus},
Kawazumi \cite{Kawazumi} (see also \cite[Part V]{Sato}),
and Perron \cite{Perron} constructed
mod $d$ Johnson homomorphisms in different manners 
on the level $d$ mapping class groups of orientable surfaces for a positive integer $d$, independently.
In Section 3, we consider {\em the mod $2$ Johnson homomorphism}
on the level 2 mapping class group of a non-orientable surface
to determine its abelianization.
In Appendix A, we give another definition of $\tau_1$
in the same manner as Johnson's original homomorphism in \cite{Johnson}.

In Section 4, by using this homomorphism, we give a lower bound 
for the dimension of the $\mathbb{Z}/2\mathbb{Z}$-module
$H_1(\Gamma_2(N_g);\mathbb{Z})$.
Since our lower and upper bound are equal
(see Lemma \ref{lemma:upper bound} and Lemma \ref{lemma:lower bound}),
we obtain the following. 
\begin{theorem}\label{theorem:abel}
When $g\ge 4$,
\[
H_1(\Gamma_2(N_g);\mathbb{Z})
\cong (\mathbb{Z}/2\mathbb{Z})^{\binom{g}{3}+\binom{g}{2}}.
\]
\end{theorem}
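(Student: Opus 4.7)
The plan is to combine the upper and lower bounds on the $\mathbb{Z}/2\mathbb{Z}$-dimension of $H_1(\Gamma_2(N_g);\mathbb{Z})$ that are established in Sections~2 and~4, namely Lemma~\ref{lemma:upper bound} and Lemma~\ref{lemma:lower bound}. The point is that both bounds land on exactly $\binom{g}{3}+\binom{g}{2}$, and their coincidence pins down the abelianization.

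The first reduction I would make is to invoke Szepietowski's Corollary~5.6, which presents $\Gamma_2(N_g)$ as a group generated by involutions. Consequently every class in $H_1(\Gamma_2(N_g);\mathbb{Z})$ has order dividing $2$, the abelianization is automatically a $\mathbb{Z}/2\mathbb{Z}$-vector space, and it suffices to compute its dimension.

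The upper bound then follows almost for free from Theorem~\ref{proposition:generators}: the generating set produced there has $(g-1)^{2}$ elements of type $Y_{i;j}$ together with $\binom{g-1}{3}$ elements of type $T^{2}_{1,j,k,l}$, and the combinatorial identity $(g-1)^{2}+\binom{g-1}{3}=\binom{g}{2}+\binom{g}{3}$ shows that this is already exactly the number predicted by the theorem. Since the cardinality of any generating set of a group is an upper bound for the $\mathbb{Z}/2\mathbb{Z}$-dimension of its abelianization, this gives $\dim H_1(\Gamma_2(N_g);\mathbb{Z})\le \binom{g}{3}+\binom{g}{2}$, which is what Lemma~\ref{lemma:upper bound} records.

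The substantive work, and the main obstacle, lies in the matching lower bound of Lemma~\ref{lemma:lower bound}. Here one uses the mod~$2$ Johnson homomorphism $\tau_1$ constructed in Section~3 (in the spirit of Broaddus--Farb--Putman, Kawazumi, and Perron, but adapted to the non-orientable setting), together with whatever auxiliary abelian invariants are needed to detect the Dehn-twist-squared classes $T^{2}_{1,j,k,l}$. The key verification is that the resulting assembled homomorphism from the abelianization into an explicit $\mathbb{Z}/2\mathbb{Z}$-vector space of dimension $\binom{g}{3}+\binom{g}{2}$ is surjective; concretely one computes the images of $Y_{i;j}$ under $\tau_1$ and the images of $T^{2}_{1,j,k,l}$ under the auxiliary invariants, and checks that the resulting matrix of values over $\mathbb{Z}/2\mathbb{Z}$ has full rank. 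Granted both bounds, their equality forces $H_1(\Gamma_2(N_g);\mathbb{Z})\cong(\mathbb{Z}/2\mathbb{Z})^{\binom{g}{3}+\binom{g}{2}}$, proving the theorem.
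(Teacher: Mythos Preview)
Your high-level outline is correct and matches the paper: the result is obtained by sandwiching $\dim_{\mathbb{Z}/2\mathbb{Z}}H_1(\Gamma_2(N_g);\mathbb{Z})$ between the upper bound of Lemma~\ref{lemma:upper bound} and the lower bound of Lemma~\ref{lemma:lower bound}, both equal to $\binom{g}{3}+\binom{g}{2}$, after observing that the abelianization is a $\mathbb{Z}/2\mathbb{Z}$-module.

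Your description of the lower bound, however, misrepresents the actual mechanism in two ways. First, no ``auxiliary abelian invariants'' are used: the single homomorphism $\tau_1$ suffices, and in fact $\tau_1$ already detects squared Dehn twists (Lemma~\ref{lem:image of tau1} gives $\tau_1(T_{i,j}^2)=(C_i+C_j)^{\otimes3}$). Second, and more importantly, $\tau_1$ is not applied to $\Gamma_2(N_g)$ directly to hit a target of dimension $\binom{g}{3}+\binom{g}{2}$. Instead the paper works on the \emph{pointed} group $\Gamma_2(N_g^*)$, where one has the extra elements $\iota(\gamma_i)$ coming from the point-pushing map $\iota:\pi\to\Gamma_2(N_g^*)$. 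Using $\tau_1$ on the classes $T_{i,j}^2$, $Y_{i;j}$, and $\iota(\gamma_i)$ (not on $T_{1,j,k,l}^2$), together with the $\mathcal{M}(N_g^*)$-equivariance of $\tau_1$ and the exact sequence relating $(H^{\otimes3})^{S_3}$ to $(H^{\otimes2})^{S_2}$ via the contraction $c$, one obtains $\dim\tau_1(\Gamma_2(N_g^*))\ge\binom{g}{3}+\binom{g}{2}+\binom{g}{1}$ (Lemma~\ref{lemma:image of johnson}). The passage down to $\Gamma_2(N_g)$ then costs at most $\dim H_1(N_g;\mathbb{Z}/2\mathbb{Z})=g$ dimensions, via the $H_1$ exact sequence induced by the forgetful sequence $\pi\to\Gamma_2(N_g^*)\to\Gamma_2(N_g)\to1$. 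That subtraction of $g$ is what produces the stated lower bound.
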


Since the dimension of $H_1(\Gamma_2(N_g);\mathbb{Z})$ is equal to
the number of our generators given in Theorem \ref{proposition:generators},
we obtain Corollary \ref{theorem:main}.

\section{A minimal generating set for $\Gamma_2(N_g)$} 
In this section, we prove Theorem \ref{proposition:generators}.
We first fix some notation.
For $\varphi=[f]$ and $\psi=[g]$ $\in \mathcal{M}(N_g)$, 
we define a product $\varphi \psi$ as the isotopy class 
$[g \circ f]$. 
For $[\alpha], [\beta] \in \pi_1(N_g)$, the product $[\alpha] \cdot [\beta]$ 
is represented by the closed curve following $\alpha$ at first and 
then following $\beta$.

\begin{proof}[Proof of Theorem \ref{proposition:generators}]
Let $G$ be the subgroup of $\Gamma_2(N_g)$ generated by the elements (i) (ii) 
in the statement of Theorem \ref{proposition:generators}. 
We will show that $T_{j,k,l,m}^2 \in G$ for 
any $1 \leq j < k < l < m \leq g$. 

\begin{figure}[hbtp]
\centering
\includegraphics[height=4cm]{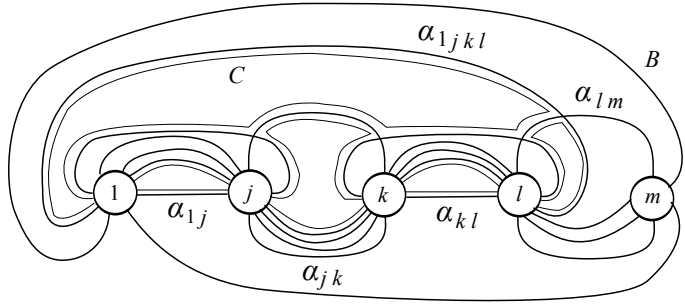}
\caption{The curves used in the proof of Theorem \ref{proposition:generators}.}
\label{fig:chain}
\end{figure}

Let $N$ be a regular neighborhood of 
$\alpha_{1,j} \cup \alpha_{j,k} \cup \alpha_{k,l} \cup \alpha_{l,m}$ in $N_g$. 
Then $N = \Sigma_{2,1}$. We define $B = \partial N$. 
Let $N'$ be a regular neighborhood of 
$\alpha_{1,j} \cup \alpha_{j,k} \cup \alpha_{k,l} \cup \alpha_{l,m} 
\cup \alpha_{1,j,k,l}$ in $N$. 
Then $N' = \Sigma_{2,2}$ and one component of $\partial N'$ is 
isotopic to $B$ and the other component $C$ of $\partial N'$ 
bounds a disk in $N$. 

For simplicity, we set the notations 
$a = T_{1,j}$, $b = T_{j,k}$, $c = T_{k,l}$, 
$d = T_{l,m}$, $e = T_{1,j,k,l}$. 
Then, by chain relation, we see $(a b c d e)^6 = T_B \cdot T_C = T_B$, 
where the last equation is valid in $\mathcal{M}(N_g)$ 
because $C$ bounds a disk in $N_g$.  
By braid relations, we see, 
$$
\begin{aligned}
(a b c d e)^6 & = abcdeedcba \cdot bcdeedcb \cdot cdeedc \cdot deed \cdot ee \\
&= abcdee\bar{d} \bar{c} \bar{b} \bar{a} \cdot abcdd \bar{c} \bar{b} \bar{a} \cdot 
abcc \bar{b} \bar{a} \cdot abb \bar{a} \cdot aa 
\cdot bcdee \bar{d} \bar{c} \bar{b} \cdot bcdd \bar{c} \bar{b} \cdot 
bcc \bar{b} \cdot bb \cdot \\
& \qquad \cdot cdee \bar{d} \bar{c} \cdot cdd \bar{c} \cdot cc 
\cdot dee \bar{d} \cdot dd \cdot ee, 
\end{aligned}
$$
where $\bar{x}$ means $x^{-1}$. 
Since 
$(\alpha_{1,j,k,l})\bar{d} \bar{c} \bar{b} \bar{a} = \alpha_{j,k,l,m}$, 
we see $abcdee\bar{d} \bar{c} \bar{b} \bar{a} = T_{j,k,l,m}^2$. 
By drawing some figures, we observe that 
$(\alpha_{l,m})\bar{c} \, \bar{b} \, \bar{a} \, Y_{m;l} Y_{m;k} Y_{m;j} = \alpha_{1,m}$, 
$(\alpha_{k,l}) \bar{b} \, \bar{a} \, Y_{l;k} Y_{l;j} = \alpha_{1,l}$, 
$(\alpha_{j,k}) \bar{a} \, Y_{k;j} = \alpha_{1,k}$, 
$(\alpha_{1,j,k,l}) \bar{d} \, \bar{c} \, \bar{b} \, Y_{j;k} = \alpha_{1,k,l,m}$, 
$(\alpha_{l,m})  \bar{c} \, \bar{b} \, Y_{m;l} Y_{m;k} = \alpha_{j,m}$, 
$(\alpha_{k,l}) \bar{b} \, Y_{l;k} = \alpha_{j,l}$, 
$(\alpha_{1,j,k,l}) \bar{d} \, \bar{c} = \alpha_{1,j,l,m}$, 
$(\alpha_{l,m}) \bar{c} \, Y_{m;l} = \alpha_{k,m}$, 
$(\alpha_{1,j,k,l}) \bar{d} \, Y_{l;m}= \alpha_{1,j,k,m}$. 
By the above observations and the equation $T_{i,j}^2 = Y_{i;j}^{-1} Y_{j;i}$ 
shown by Szepietowski \cite[Lemma 3.3]{Szepietowski2}, 
we see 
$abcdd\bar{c} \bar{b} \bar{a} \cdot 
abcc \bar{b} \bar{a} \cdot abb \bar{a} \cdot aa 
\cdot bcdee \bar{d} \bar{c} \bar{b} \cdot bcdd \bar{c} \bar{b} \cdot 
bcc \bar{b} \cdot bb \cdot cdee \bar{d} \bar{c} \cdot cdd \bar{c} \cdot cc 
\cdot dee \bar{d} \cdot dd \cdot ee $ $\in G$. 
On the other hand, for each $n \not=  1,j,k,l,m$, there are two-sided curves 
$\beta_n$ intersecting $\alpha_n$ transversely in one point and 
satisfy 
$$
T_B = \left( \prod_{n \not= i,j,k,l,m} Y_{\alpha_n, \beta_n} \right)^2.
$$
Szepietowski showed $Y_{\alpha_n, \beta_n} \in G$ in 
\cite[Lemma 3.5]{Szepietowski2}. 
Therefore, we see $T_{j,k,l,m}^2 \in G$. 
\end{proof}

By the above argument, we can show the following two results. 

\begin{proposition}\label{prop:BCC map}
For any separating simple closed curve $B$ on $N_g$, 
$[T_B] = 0$ as an element of $H_1(\Gamma_2(N_g);\mathbb{Z})$. 
\end{proposition}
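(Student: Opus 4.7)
The plan is to realize $T_B$ as a square of an element of $\Gamma_2(N_g)$; since $\Gamma_2(N_g)$ is generated by involutions (Szepietowski \cite{Szepietowski1}, Corollary~5.6), the abelianization $H_1(\Gamma_2(N_g);\mathbb{Z})$ is a $\mathbb{Z}/2\mathbb{Z}$-module, and therefore every element of the form $X^2$ with $X\in\Gamma_2(N_g)$ vanishes in $H_1(\Gamma_2(N_g);\mathbb{Z})$. So it suffices to write $T_B=X^2$ for some $X\in\Gamma_2(N_g)$.

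For the particular separating curve $B=\partial N$ appearing in the proof of Theorem \ref{proposition:generators}, the identity
\[
T_B = \Bigl(\prod_{n \ne 1,j,k,l,m} Y_{\alpha_n,\beta_n}\Bigr)^{2}
\]
exhibited at the end of that proof is already of this form, each factor $Y_{\alpha_n,\beta_n}$ being a $Y$-homeomorphism and hence an element of $\Gamma_2(N_g)$. So $[T_B]=0$ for this specific $B$. For an arbitrary separating two-sided simple closed curve $B\subset N_g$, I would run the same kind of chain-relation argument on $B$'s complement to produce a similar square representation: since $N_g$ is nonorientable, at least one component of $N_g\setminus B$ is a nonorientable surface with boundary, on which one may choose one-sided curves $\mu_1,\ldots,\mu_k$ together with transverse two-sided partners $\beta_1,\ldots,\beta_k$ analogous to the $\alpha_n,\beta_n$ above, and then invoke a chain relation whose regular neighborhood has $B$ among its boundary components (any extra boundary component bounding a disk in $N_g$). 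The resulting identity again expresses $T_B$ as the square of a product of $Y$-homeomorphisms in $\Gamma_2(N_g)$, so $[T_B]=0$.

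The main obstacle I expect is producing such a square representation uniformly across all topological types of separating two-sided curves in $N_g$: those with an orientable side $\Sigma_{h,1}$ of arbitrary genus $h$, and also those whose complement consists of two nonorientable pieces. In each case the construction reduces to choosing an appropriate chain of curves adapted to the topology of $N_g\setminus B$ and verifying that the standard Szepietowski-type relation $T_B=(\prod_n Y_{\mu_n,\beta_n})^{2}$ goes through; once this topological step is settled, the conclusion $[T_B]=0$ follows at once from the $2$-torsion of $H_1(\Gamma_2(N_g);\mathbb{Z})$.
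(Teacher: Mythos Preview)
Your approach is essentially the paper's own: since $B$ separates and $N_g$ is nonorientable, at least one component $N'$ of $N_g\setminus B$ is nonorientable, and on $N'$ one writes $T_B$ as the square of a product of $Y$-homeomorphisms, which then dies in the $2$-torsion abelianization. The ``obstacle'' you anticipate is not one---the argument only uses the nonorientable side $N'$, which always exists and always carries the crosscap-slide relation $T_B=(\prod_n Y_{\mu_n,\beta_n})^2$ (this is not a chain relation, despite your phrasing); the orientable side, if any, plays no role.
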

\begin{proof}
At least one component, say $N'$, of $N_g - B$ is non-orientable. 
Since this surface $N'$ is constructed from a disk by removing 
several disks and attaching several M\"{o}bius bands, 
we can show that $T_B$ is the square of the product of $Y$-homeomorphisms, 
by using the same argument of the proof for 
Theorem \ref{proposition:generators}. 
Since $H_1(\Gamma_2(N_g);\mathbb{Z})$ is a $\mathbb{Z}/2\mathbb{Z}$-module, 
we see $[T_B] = 0$. 
\end{proof} 

\begin{lemma}\label{lemma:upper bound}
When $g\ge4$,
\[
\dim_{\mathbb{Z}/2\mathbb{Z}}H_1(\Gamma_2(N_g);\mathbb{Z})\le 
\binom{g}{3}+\binom{g}{2}.
\]
\end{lemma}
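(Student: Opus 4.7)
The plan is essentially to read off the bound from Theorem~\ref{proposition:generators} and count. The key input already supplied in the introduction is that $H_1(\Gamma_2(N_g);\mathbb{Z})$ is a $\mathbb{Z}/2\mathbb{Z}$-module: by \cite[Corollary 5.6]{Szepietowski1}, $\Gamma_2(N_g)$ is generated by involutions, so every element of its abelianization has order dividing $2$. Hence $\dim_{\mathbb{Z}/2\mathbb{Z}} H_1(\Gamma_2(N_g);\mathbb{Z})$ is at most the cardinality of any generating set of $\Gamma_2(N_g)$.

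I would then apply Theorem~\ref{proposition:generators}, which supplies a generating set consisting of the elements $Y_{i;j}$ with $i \in \{1,\ldots,g-1\}$, $j \in \{1,\ldots,g\}$ and $i \ne j$, together with the elements $T^2_{1,j,k,l}$ with $1 < j < k < l \le g$. A direct count yields $(g-1)^2$ generators of the first type and $\binom{g-1}{3}$ of the second, for a total of $(g-1)^2 + \binom{g-1}{3}$.

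The lemma then follows from the elementary identity $(g-1)^2 + \binom{g-1}{3} = \binom{g}{3} + \binom{g}{2}$, which one can verify by combining Pascal's rule $\binom{g}{3} = \binom{g-1}{3} + \binom{g-1}{2}$ with the identity $\binom{g}{2} + \binom{g-1}{2} = (g-1)^2$ (both sides of the latter count the ordered pairs in $\{1,\ldots,g-1\}^2$, or one can simply expand). There is no real obstacle here; the substantive work lies entirely in Theorem~\ref{proposition:generators}, whose proof has just been completed, and the lemma itself is then a bookkeeping exercise.
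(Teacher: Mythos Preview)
Your proof is correct and follows exactly the paper's approach: use Szepietowski's result that $\Gamma_2(N_g)$ is generated by involutions to conclude the abelianization is a $\mathbb{Z}/2\mathbb{Z}$-module, then bound its dimension by the cardinality of the generating set from Theorem~\ref{proposition:generators}. The paper simply asserts the count $\binom{g}{3}+\binom{g}{2}$ without verification, whereas you spell out the identity $(g-1)^2+\binom{g-1}{3}=\binom{g}{3}+\binom{g}{2}$ explicitly.
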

\begin{proof}
Since $\Gamma_2(N_g)$ is generated by involutions,
$H_1(\Gamma_2(N_g);\mathbb{Z})$ is a $\mathbb{Z}/2\mathbb{Z}$-module.
As we saw in Theorem \ref{proposition:generators},
$\Gamma_2(N_g)$ is generated by $\binom{g}{3}+\binom{g}{2}$ elements.
Hence, the abelianization is also generated by $\binom{g}{3}+\binom{g}{2}$ elements
as a $\mathbb{Z}/2\mathbb{Z}$-module.
\end{proof}

\section{The mod $2$ Johnson homomorphism $\tau_1:\Gamma_2(N_g^*)\to (H^{\otimes3})^{S_3}$}
Let $D\subset N_g$ be a closed disk in the surface $N_g$,
and pick a point $*$ in $\partial D$.
We denote by $\mathcal{M}(N_g^*)$
the group of isotopy classes of diffeomorphisms over $N_g$ fixing $*$,
and by $\Gamma_2(N_g^*)$ the level 2 mapping class group 
$\Ker(\mathcal{M}(N_g^*)\to \Aut(H_1(N_g;\mathbb{Z}/2\mathbb{Z}),\cdot))$.
For $i=1,2,\ldots, g$,
let $\gamma_i\in\pi_1(N_g-\Int D,*)$ denote the element
represented by a loop as in Figure \ref{fig:circle}.
The fundamental group $\pi_1(N_g-\Int D,*)$ is a free group of rank $g$
generated by $\{\gamma_i\}_{i=1}^g$.
\begin{figure}[hbtp]
\centering
\includegraphics[height=2.7cm]{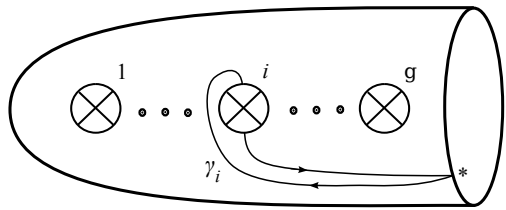}
\caption{loops $\gamma_i$}
\label{fig:circle}
\end{figure}
We denote by $H$ the first homology $H_1(N_g;\mathbb{Z}/2\mathbb{Z})$,
and denote by $C_i\in H$ the homology class represented by $\gamma_i$.
The group $H$ is a free $\mathbb{Z}/2\mathbb{Z}$-module of rank $g$,
generated by $\{C_i\}_{i=1}^g\subset H$.
Let us denote $\omega=\sum_{i=1}^gC_i^{\otimes 2}\in H^{\otimes2}$.
In this section, we construct an $\mathcal{M}(N_g^*)$-equivariant homomorphism
$\tau_1:\Gamma_2(N_g^*)\to H^{\otimes 3}/(H\otimes\braket{\omega})$,
where $\braket{\omega}\subset H^{\otimes 2}$ is the submodule generated by $\omega$.
We also ``lift" it to a homomorphism
$\tau_1:\Gamma_2(N_g^*)\to (H^{\otimes 3})^{S_3}$.

First, we review the Magnus expansion.
See, for example, \cite{Bourbaki} and \cite{Kawazumi}.
Let $\hat{T}:=\prod_{m=0}^{\infty}H^{\otimes m}$ denote
the completed tensor algebra generated by $H$,
and let $\hat{T}_i$ denote the subalgebra
$\hat{T}_i=\prod_{m\ge i}H^{\otimes m}$.
Note that the subset $1+\hat{T}_1=\{1+v\in \hat{T}\,|\,v\in\hat{T}_1\}$
is a subgroup of the multiplicative group $\hat{T}$.
Define a homomorphism
\[
\theta:\pi_1(N_g-\Int D,*)\to 1+\hat{T}_1
\]
by $\theta(\gamma_i)=1+C_i$,
which is called the standard Magnus expansion.
Denote  by $\theta_2: \pi_1(N_g-\Int D,*)\to H^{\otimes 2}$
the composition map of $\theta$ and the projection $\hat{T}\to H^{\otimes 2}$.

In the following, we denote by $\pi=\pi_1(N_g,*)$ and $\pi'=\pi_1(N_g-\Int D,*)$.
\begin{lemma}
The map $\theta_2:\pi'\to H^{\otimes2}$ induces
a map $\bar{\theta}_2:\pi\to H^{\otimes2}/\braket{\omega}$.
\end{lemma}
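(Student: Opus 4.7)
The plan is to identify $\pi$ as a quotient of $\pi'$ and to check that $\theta_2$ descends modulo $\braket{\omega}$. Since $D$ is attached to $N_g-\Int D$ along its boundary, which represents the standard relator $r := \gamma_1^2\gamma_2^2\cdots\gamma_g^2$ of the non-orientable surface group in $\pi'$, we have $\pi = \pi'/N$ where $N \trianglelefteq \pi'$ is the normal closure of $r$. It then suffices to show that
\[
\theta_2(xn) \equiv \theta_2(x) \pmod{\braket{\omega}} \qquad \text{for every } x \in \pi' \text{ and } n \in N.
\]

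Because $\theta_2$ is not itself a homomorphism but only satisfies a cocycle-type relation involving $\theta_1$, the cleanest approach is to work with the honest multiplicative Magnus expansion $\theta$ reduced modulo $\hat{T}_3$, and to extract the degree-$2$ part at the end. The key computation is the mod-$2$ identity $(1+C_i)^2 = 1 + C_i\otimes C_i$, which immediately gives
\[
\theta(r) \equiv 1 + \omega \pmod{\hat{T}_3}.
\]
Since $\omega$ has degree $2$, its product with any element of $\hat{T}_1$ lands in $\hat{T}_3$, so conjugating $1 + \omega$ by any $\theta(\alpha)$ leaves it unchanged modulo $\hat{T}_3$. The same degree reasoning, together with the fact that $\omega^j \in \hat{T}_{2j}$ for $j \geq 2$, yields $\theta(n) \equiv (1+\omega)^k \equiv 1 + k\omega \pmod{\hat{T}_3}$ whenever $n$ is written as a product of $k$ conjugates of $r^{\pm1}$.

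The conclusion then follows by one more application of multiplicativity: $\theta(xn) \equiv \theta(x)(1 + k\omega) \equiv \theta(x) + k\omega \pmod{\hat{T}_3}$, and extracting degree $2$ gives $\theta_2(xn) = \theta_2(x) + k\omega$, which is congruent to $\theta_2(x)$ modulo $\braket{\omega}$, proving the lemma. The main obstacle is purely bookkeeping, stemming from the fact that $\theta_2$ is not a homomorphism; passing to $\theta$ modulo $\hat{T}_3$ restores multiplicativity and reduces the argument to elementary degree counts. The indispensable mod-$2$ ingredient is that $(1+C_i)^2$ has no linear term, so $\theta_1(r) = 0$ and the relator lands exactly at the distinguished element $\omega$ in degree $2$.
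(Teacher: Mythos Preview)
Your proof is correct and follows essentially the same approach as the paper: identify $\Ker(\pi'\to\pi)$ as the normal closure of $r=\gamma_1^2\cdots\gamma_g^2$, use that $\theta_1(r)=0$ and $\theta_2(r)=\omega$ to see that multiplying by any element of this kernel shifts $\theta_2$ by a multiple of $\omega$. The only difference is packaging---you work multiplicatively in $\hat{T}/\hat{T}_3$ while the paper works directly with $\theta_2$ via its additive cocycle formula---but the underlying computation is identical.
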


\begin{proof}
The boundary curve $\partial (N_g-\Int D)$
is represented by $r=\prod_{i=1}^g\gamma_i^2\in \pi'$.
Since $\pi$ is the fundamental group of the surface $N_g$,
$\Ker(\pi'\to\pi)$ is generated by $\{xrx^{-1}\,|\,x\in\pi'\}$.
For $x,y\in\pi'$,
\[
\theta_2(yxrx^{-1})=\theta_2(y)+\theta_2(xrx^{-1})=\theta_2(y)+\omega.
\]
Hence, for any $s\in\Ker(\pi'\to\pi)$,
we have $\theta_2(ys)-\theta_2(y)\in\braket{\omega}$.
Thus, $\theta_2$ induces a map
$\bar{\theta}_2:\pi\to H^{\otimes2}/\braket{\omega}$.
\end{proof}

As a corollary of \cite[Lemma 2.1]{Kawazumi},
we have:
\begin{lemma}\label{lem:mod2 Johnson}
Let $g\ge2$.
The map
\[
\tau_1:\mathcal{M}(N_g^*)\to \Hom(\pi, H^{\otimes 2}/\braket{\omega})
\]
defined by
$\tau_1(\varphi)(\gamma)
=\bar{\theta}_2(\gamma)-\varphi_*\bar{\theta}_2(\varphi^{-1}(\gamma))$
is a crossed homomorphism.
\end{lemma}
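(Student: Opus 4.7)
The plan is to derive this as a direct descent of Kawazumi's Lemma 2.1 for the standard Magnus expansion on the free group $\pi'$. Kawazumi's lemma states that $\varphi \mapsto \bigl[\gamma \mapsto \theta_2(\gamma) - \varphi_*\theta_2(\varphi^{-1}\gamma)\bigr]$ is a crossed homomorphism from $\Aut(\pi')$ to $\Hom(\pi', H^{\otimes 2})$. My goal is to descend both the source $\pi' \twoheadrightarrow \pi$ and the target $H^{\otimes 2} \twoheadrightarrow H^{\otimes 2}/\braket{\omega}$, working with the automorphisms induced by $\mathcal{M}(N_g^*)$.

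First, I would verify that for each $\varphi \in \mathcal{M}(N_g^*)$, $\tau_1(\varphi)$ is a well-defined homomorphism $\pi \to H^{\otimes 2}/\braket{\omega}$. Well-definedness of $\bar\theta_2(\gamma)$ and $\bar\theta_2(\varphi^{-1}\gamma)$ is given by the preceding lemma; well-definedness of the induced action of $\varphi_*$ on $H^{\otimes 2}/\braket{\omega}$ follows because $\omega$ is fixed by the orthogonal group of the mod $2$ intersection form in the basis $\{C_i\}$, hence by every $\varphi_* \in \Aut(H,\cdot)$. For additivity, one expands
\[
\bar\theta_2(\gamma_1\gamma_2) = \bar\theta_2(\gamma_1) + \bar\theta_2(\gamma_2) + [\gamma_1]\otimes[\gamma_2]
\]
and uses $\varphi_*\bigl([\varphi^{-1}\gamma_1]\otimes[\varphi^{-1}\gamma_2]\bigr) = [\gamma_1]\otimes[\gamma_2]$ (valid for any $\varphi$, not just elements of $\Gamma_2$) to see that the quadratic cross terms cancel in the difference.

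The cocycle identity is then a direct computation. Using the paper's convention $(\varphi\psi)_* = \psi_* \circ \varphi_*$ and $(\varphi\psi)^{-1}(\gamma) = \varphi^{-1}\psi^{-1}(\gamma)$, a short rearrangement gives
\[
\tau_1(\varphi\psi)(\gamma) = \tau_1(\psi)(\gamma) + \psi_*\bigl(\tau_1(\varphi)(\psi^{-1}\gamma)\bigr),
\]
which is exactly the cocycle condition for the natural $\mathcal{M}(N_g^*)$-action on $\Hom(\pi, H^{\otimes 2}/\braket{\omega})$.

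The main step is really just to set up the descent correctly: confirming that $\omega$ is $\mathcal{M}(N_g^*)$-invariant, so that the target quotient inherits a well-defined action, and that the naturality of $\bar\theta_2$ from the preceding lemma makes $\tau_1(\varphi)$ depend only on the class of $\gamma$ in $\pi$ rather than on a chosen lift in $\pi'$. Both points are essentially immediate once the preceding lemma is in hand, and from there the crossed homomorphism property is inherited directly from Kawazumi's Lemma 2.1 through the two quotient maps.
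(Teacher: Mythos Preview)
Your proposal is correct and follows exactly the approach the paper indicates: the paper simply states this lemma ``as a corollary of \cite[Lemma 2.1]{Kawazumi}'' without further detail, and your argument spells out precisely the descent from Kawazumi's result on $\pi'$ and $H^{\otimes 2}$ to $\pi$ and $H^{\otimes 2}/\braket{\omega}$ that this citation implicitly invokes. Your verification that $\omega$ is $\Aut(H,\cdot)$-invariant and that the cross terms cancel in the additivity check are the right points to make explicit.
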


By Poincar\'{e} duality, we have 
\[
\Hom(\pi, H^{\otimes 2}/\braket{\omega})
\cong H\otimes (H^{\otimes 2}/\braket{\omega})
\cong H^{\otimes 3}/(H\otimes\braket{\omega}).
\]
Since the level 2 mapping class group $\Gamma_2(N_g^*)$ acts on $H$ trivially,
the restriction
\[
\tau_1:\Gamma_2(N_g^*)\to \frac{H^{\otimes 3}}{H\otimes\braket{\omega}}
\]
is an $\mathcal{M}(N_g^*)$-equivariant homomorphism.

For $X,Y,Z\in H$, let us denote
\begin{align*}
S(X,Y,Z)&=X\otimes Y\otimes Z+Y\otimes Z\otimes X+Z\otimes X\otimes Y\\
&\quad+X\otimes Z\otimes Y+Z\otimes Y\otimes X+Y\otimes X\otimes Z,\\
S(X,Y)&=X\otimes X\otimes Y+X\otimes Y\otimes X+Y\otimes X\otimes X.
\end{align*}

Recall the forgetful exact sequence
\[
\begin{CD}
\pi@>\iota>>\mathcal{M}(N_g^*)@>>>\mathcal{M}(N_g)@>>>1,
\end{CD}
\]
where the map $\iota: \pi\to \mathcal{M}(N_g^*)$ is so-called the pushing map.
Note that $\iota(\pi)\subset \Gamma_2(N_g^*)$
since $\iota(\pi)$ acts trivially on $H_1(N_g;\mathbb{Z})$.
We can compute values of $\tau_1$ as follows.
For an integer $n\ge 2$, 
let $(H^{\otimes n})^{S_n}$ denote the $S_n$-invariant part of $H^{\otimes n}$.
\begin{lemma}\label{lem:image of tau1}
For $i,j\in\{1,\cdots,g\}$ and $i\ne j$,
\[
\tau_1(Y_{i;j})=S(C_i,C_i+C_j),\quad
\tau_1(T_{i,j}^2)=(C_i+C_j)^{\otimes 3},\quad 
\tau_1(\iota(\gamma_i))=\sum_{j=1}^gS(C_j,C_i).
\]
In particular, we have
\[
\tau_1(\Gamma_2(N_g^*))
\subset \Im\left((H^{\otimes 3})^{S_3}\to\frac{H^{\otimes3}}{H\otimes\braket{\omega}}\right),
\]
where $(H^{\otimes3})^{S_3}\to H^{\otimes 3}/(H\otimes\braket{\omega})$ is
the composition map of the inclusion $(H^{\otimes3})^{S_3}\to H^{\otimes3}$
and the projection $H^{\otimes3}\to H^{\otimes 3}/(H\otimes\braket{\omega})$.
\end{lemma}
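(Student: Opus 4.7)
The plan is to evaluate $\tau_1$ on each of the three classes of elements directly from the defining formula $\tau_1(\varphi)(\gamma)=\bar\theta_2(\gamma)-\varphi_*\bar\theta_2(\varphi^{-1}(\gamma))$ and then deduce the ``in particular'' assertion from a generation argument. Since every $\varphi\in\Gamma_2(N_g^*)$ acts trivially on $H$ and hence on $H^{\otimes 2}/\braket{\omega}$, the map $\varphi_*$ disappears, leaving $\tau_1(\varphi)(\gamma)=\bar\theta_2(\gamma)-\bar\theta_2(\varphi^{-1}(\gamma))$. Under the Poincar\'{e} duality isomorphism $\Hom(\pi,H^{\otimes 2}/\braket{\omega})\cong H^{\otimes 3}/(H\otimes\braket{\omega})$, and using that $\{C_i\}$ is self-dual with respect to the mod 2 intersection form (as $C_i\cdot C_j=\delta_{ij}$), a crossed homomorphism $\gamma\mapsto v(\gamma)$ corresponds to the tensor $\sum_{k=1}^g C_k\otimes v(\gamma_k)$, so it is enough to compute $v(\gamma_k)=\bar\theta_2(\gamma_k)-\bar\theta_2(\varphi^{-1}(\gamma_k))$ for each $k$.

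Next I would write down explicit formulas for the action of each generator on the free basis $\{\gamma_k\}_{k=1}^g$ of $\pi'$. For $Y_{i;j}$, only $\gamma_i$ and $\gamma_j$ are affected, with a word reflecting the $Y$-homeomorphism twist along the two-sided curve $\alpha_{i,j}$ past the M\"obius band around the one-sided curve $\alpha_i=\gamma_i$. For $T_{i,j}^2$, each affected $\gamma_k$ is conjugated by a power of a word representing $\alpha_{i,j}$. For $\iota(\gamma_i)$, the pushing map acts essentially by conjugation by $\gamma_i$, modified to respect the boundary relation $r=\prod_k\gamma_k^2$. Substituting these words into $\theta(\gamma_k)=1+C_k$, multiplying out in $\hat{T}$ and projecting to $H^{\otimes 2}/\braket{\omega}$ produces $v(\gamma_k)$. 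Reassembling via Poincar\'{e} duality yields the three claimed tensors $S(C_i,C_i+C_j)$, $(C_i+C_j)^{\otimes 3}$, and $\sum_{j=1}^g S(C_j,C_i)$.

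Each of these expressions is a symmetrization and hence visibly lies in $(H^{\otimes 3})^{S_3}$. To obtain the ``in particular'' conclusion, I would combine Theorem \ref{proposition:generators} with the forgetful exact sequence $\pi\xrightarrow{\iota}\Gamma_2(N_g^*)\to\Gamma_2(N_g)\to 1$, which shows that $\Gamma_2(N_g^*)$ is generated by $\iota(\gamma_i)$, $Y_{i;j}$, and lifts of $T_{1,j,k,l}^2$. The first two types have $\tau_1$-values in $(H^{\otimes 3})^{S_3}$ by the formulas above, and for $T_{1,j,k,l}^2$ the same Magnus computation produces the $S_3$-invariant value $(C_1+C_j+C_k+C_l)^{\otimes 3}$ (alternatively, using $T_{i,j}^2=Y_{i;j}^{-1}Y_{j;i}$ together with the chain relation from the proof of Theorem \ref{proposition:generators} and Proposition \ref{prop:BCC map}, one can express the relevant class as a product of generators whose $\tau_1$-values already lie in the invariant part).

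The principal obstacle is the second step: producing the explicit action of each $\varphi$ on the generators $\gamma_k$ in a consistent basepoint and orientation convention, and then carrying out the degree-two Magnus expansion without arithmetic or sign errors, while verifying that all contributions coming from the boundary relation $r=\prod_k\gamma_k^2$ are absorbed by $\braket{\omega}$. Once that calculation is done, the passage through Poincar\'{e} duality, the check of $S_3$-invariance, and the enumeration of generators of $\Gamma_2(N_g^*)$ are all routine.
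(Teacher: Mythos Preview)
Your proposal is correct and follows essentially the same route as the paper: write down the action of each map on the $\gamma_k$, expand to degree two in the Magnus expansion, and reassemble via Poincar\'e duality. The only differences are cosmetic: the paper obtains $\tau_1(T_{i,j}^2)$ from the relation $T_{i,j}^2=Y_{i;j}^{-1}Y_{j;i}$ rather than from a direct Dehn-twist computation, and it leaves the ``in particular'' generation argument implicit, whereas you spell it out via Theorem~\ref{proposition:generators} and the forgetful sequence.
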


\begin{proof}
When $i<j$, 
the diffeomorphism $Y_{i;j}^{-1}$ acts on $\pi$ as follows.
\begin{align*}
Y_{i;j}^{-1}(\gamma_i)&=
(\gamma_{i+1}^2\cdots\gamma_{j-1}^2\gamma_j\gamma_{j-1}^{-2}\cdots\gamma_{i+1}^{-2})^{-1}
\gamma_i^{-1}
(\gamma_{i+1}^2\cdots\gamma_{j-1}^2\gamma_j\gamma_{j-1}^{-2}\cdots\gamma_{i+1}^{-2}),\\
Y_{i;j}^{-1}(\gamma_j)&=
\gamma_j(\gamma_{i+1}^2\cdots\gamma_j^2)^{-1}
\gamma_i^2
(\gamma_{i+1}^2\cdots\gamma_j^2),\\
Y_{i;j}^{-1}(\gamma_k)&=\gamma_k \text{ for }k\ne i,j.
\end{align*}
Thus,
we have
\begin{align*}
\quad\tau_1(Y_{i;j})(\gamma_i)
&=\bar{\theta}_2(\gamma_i)
-\bar{\theta}_2(
(\gamma_{i+1}^2\cdots\gamma_{j-1}^2\gamma_j\gamma_{j-1}^{-2}\cdots\gamma_{i+1}^{-2})^{-1}
\gamma_i^{-1}
(\gamma_{i+1}^2\cdots\gamma_{j-1}^2\gamma_j\gamma_{j-1}^{-2}\cdots\gamma_{i+1}^{-2})
)\\
&=C_i\otimes C_j+C_j\otimes C_i+C_i^{\otimes 2},\\
\quad\tau_1(Y_{i;j})(\gamma_j)
&=\bar{\theta}_2(\gamma_j)
-\bar{\theta}_2(
\gamma_j(\gamma_{i+1}^2\cdots\gamma_j^2)^{-1}
\gamma_i^2
(\gamma_{i+1}^2\cdots\gamma_j^2)
)\\
&=C_i^{\otimes2},\\
\quad\tau_1(Y_{i;j})(\gamma_k)
&=0,
\end{align*}
Therefore, we obtain $\tau_1(Y_{i;j})=S(C_i,C_i+C_j)$  when $i<j$.

When $i>j$, 
the diffeomorphism $Y_{i;j}^{-1}$ acts on $\pi$ as follows.
\begin{align*}
Y_{i;j}^{-1}(\gamma_i)&=
(\gamma_i^{-2}\cdots\gamma_{j+1}^{-2}\gamma_j\gamma_{j+1}^2\cdots\gamma_i^2)^{-1}
\gamma_i^{-1}
(\gamma_i^{-2}\cdots\gamma_{j+1}^{-2}\gamma_j\gamma_{j+1}^2\cdots\gamma_i^2),\\
Y_{i;j}^{-1}(\gamma_j)&=
\gamma_j(\gamma_{j+1}^2\cdots\gamma_{i-1}^2)
\gamma_i^2
(\gamma_{j+1}^2\cdots\gamma_{i-1}^2)^{-1},\\
Y_{i;j}^{-1}(\gamma_k)&=\gamma_k \text{ for }k\ne i,j.
\end{align*}
In the same way, we also obtain $\tau_1(Y_{i;j})=S(C_i,C_i+C_j)$ when $i>j$.

Next,
by \cite[Lemma 3.3]{Szepietowski2},
we have
\[
T_{i,j}^2=Y_{i;j}^{-1}Y_{j;i}
\]
for $i\ne j$.
Hence, we have
\[
\tau_1(T_{i,j}^2)
=-\tau_1(Y_{i;j})+\tau_1(Y_{j;i})
=S(C_i,C_i+C_j)+S(C_j,C_i+C_j)=(C_i+C_j)^{\otimes 3}.
\]

Finally, we consider $\iota(\gamma_i)$.
For  $i\ne j$,
\[
\iota(\gamma_i)(\gamma_i)=\gamma_i,\quad
\iota(\gamma_i)(\gamma_j)=\gamma_i\gamma_j\gamma_i^{-1}.
\]
Thus, we have
\[
\tau_1(\iota(\gamma_i))
=\sum_{j=1}^g(C_j\otimes C_i\otimes C_j+C_j\otimes C_j\otimes C_i)
=\sum_{j=1}^gS(C_j,C_i)\in H^{\otimes 3}/(H\otimes\braket{\omega}).
\]
\end{proof}

\begin{lemma}\label{lem:intersection of modules}
When $g\ge2$,
\[
(H\otimes\braket{\omega})\cap (H^{\otimes3})^{S_3}=0.
\]
\end{lemma}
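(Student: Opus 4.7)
The plan is a direct coordinate computation in the basis $\{C_i\}_{i=1}^g$ of $H$. Since the $C_i$ are $\mathbb{Z}/2\mathbb{Z}$-linearly independent, $\omega=\sum_{i=1}^g C_i^{\otimes2}$ is nonzero, so $\braket{\omega}\cong\mathbb{Z}/2\mathbb{Z}$ and every element of $H\otimes\braket{\omega}$ has the form
\[
v\otimes\omega=\sum_{i=1}^g v\otimes C_i\otimes C_i
\]
for a uniquely determined $v\in H$. The lemma therefore reduces to showing that if $v\otimes\omega\in(H^{\otimes3})^{S_3}$, then $v=0$.

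Write $v=\sum_{k=1}^g a_k C_k$ with $a_k\in\mathbb{Z}/2\mathbb{Z}$. In the standard basis $\{C_a\otimes C_b\otimes C_c\}_{1\le a,b,c\le g}$ of $H^{\otimes3}$, the coefficient of $C_a\otimes C_b\otimes C_c$ in $v\otimes\omega$ is $a_a$ when $b=c$ and $0$ otherwise. In particular, $v\otimes\omega$ is automatically invariant under the transposition $(2\,3)$ of the last two tensor factors. Since $(2\,3)$ together with the transposition $\sigma=(1\,2)$ of the first two tensor factors generates $S_3$, it suffices to impose $\sigma$-invariance. Reading off coefficients of $\sigma\cdot(v\otimes\omega)$, one sees that the coefficient of $C_a\otimes C_b\otimes C_c$ in $\sigma\cdot(v\otimes\omega)$ equals $a_b$ when $a=c$ and $0$ otherwise.

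Equating the two coefficient functions for every triple $(a,b,c)$ and specializing to any choice $b=c$ with $a\neq b$ (which exists because $g\ge2$) gives $a_a=0$ for every index $a$, and hence $v=0$. There is no real obstacle in the argument: it is essentially a one-step comparison of basis coefficients, and the hypothesis $g\ge2$ enters only to guarantee that an auxiliary index can always be chosen different from a given one.
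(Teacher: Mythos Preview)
Your argument is correct: the coefficient comparison is clean, and the use of $g\ge 2$ to pick an index $b\ne a$ is exactly what is needed to force each $a_a=0$.

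The paper proceeds differently. Instead of working in coordinates, it defines an $\mathcal{M}(N_g)$-equivariant linear map $f:H^{\otimes 3}\to H^{\otimes 2}$ by $f(X\otimes Y\otimes Z)=w_1(X)\,Y\otimes Z+w_1(Y)\,Z\otimes X$, where $w_1$ is the first Stiefel--Whitney class (so $w_1(C_i)=1$ for every $i$). One checks directly that $f$ annihilates each $S_3$-orbit sum, hence vanishes on $(H^{\otimes 3})^{S_3}$, while $f(C_i\otimes\omega)=\omega+\sum_j C_j\otimes C_i$; since these values are linearly independent, $f$ is injective on $H\otimes\braket{\omega}$, and the intersection is trivial. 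Your proof is more elementary and self-contained, essentially unpacking the same linear algebra in the $C_i$-basis. The paper's version has the advantage of being coordinate-free and manifestly equivariant, which fits the ambient discussion of $\mathcal{M}(N_g^*)$-equivariant maps, but for the bare statement of the lemma your direct computation is arguably quicker.
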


\begin{proof}
Let $w_1\in H^1(N_g;\mathbb{Z}/2\mathbb{Z})$ denote the first Stiefel-Whitney class.
Define a $\mathcal{M}(N_g)$-equivariant map $f:H^{\otimes3}\to H^{\otimes2}$
by $f(X\otimes Y\otimes Z)=w_1(X)Y\otimes Z+w_1(Y)Z\otimes X$.
Then, we have 
\[
f((H^{\otimes 3})^{S_3})=0, \text{ and }
f(C_i\otimes \omega)=\omega+\sum_{j=1}^g C_j\otimes C_i.
\]
Therefore, we see that $(H^{\otimes 3})^{S_3}\subset \Ker f$,
and the restriction map $f|_{H\otimes\braket{\omega}}$ is injective.
They imply that $(H\otimes\braket{\omega})\cap (H^{\otimes 3})^{S_3}=0$.
\end{proof}

By Lemma \ref{lem:image of tau1} and Lemma \ref{lem:intersection of modules}, we have:
\begin{lemma}
The map
\[
\tau_1:\Gamma_2(N_g^*)\to\frac{H^{\otimes 3}}{H\otimes\braket{\omega}}
\]
lifts to the map $\tau_1:\Gamma_2(N_g^*)\to (H^{\otimes 3})^{S_3}$
with respect to the injective homomorphism
$(H^{\otimes 3})^{S_3}\to (H^{\otimes 3})/(H\otimes\braket{\omega})$.
\end{lemma}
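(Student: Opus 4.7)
The plan is to assemble this lemma as a direct formal consequence of the two preceding results, which have been set up precisely so that their combination produces the lift automatically. By the ``In particular'' clause of Lemma \ref{lem:image of tau1}, every value $\tau_1(\varphi)$ with $\varphi \in \Gamma_2(N_g^*)$ lies in the image of the composition
\[
j:(H^{\otimes 3})^{S_3} \hookrightarrow H^{\otimes 3} \twoheadrightarrow \frac{H^{\otimes 3}}{H \otimes \braket{\omega}}.
\]
By Lemma \ref{lem:intersection of modules}, $\ker j = (H^{\otimes 3})^{S_3} \cap (H \otimes \braket{\omega}) = 0$, so $j$ is injective. Consequently, for each $\varphi \in \Gamma_2(N_g^*)$ there is a unique element $\widetilde{\tau}_1(\varphi) \in (H^{\otimes 3})^{S_3}$ with $j(\widetilde{\tau}_1(\varphi)) = \tau_1(\varphi)$, and this assignment defines the desired set-theoretic lift.

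It remains to check that $\widetilde{\tau}_1$ is actually a homomorphism, which is essentially automatic. As noted just after Lemma \ref{lem:mod2 Johnson}, the restriction of the crossed homomorphism $\tau_1$ from $\mathcal{M}(N_g^*)$ to $\Gamma_2(N_g^*)$ is an honest group homomorphism, since $\Gamma_2(N_g^*)$ acts trivially on $H$ (and therefore on $H^{\otimes 3}/(H\otimes\braket{\omega})$). Since $j$ is an injective homomorphism of abelian groups, the equality
\[
j(\widetilde{\tau}_1(\varphi\psi))=\tau_1(\varphi\psi)=\tau_1(\varphi)+\tau_1(\psi)=j(\widetilde{\tau}_1(\varphi)+\widetilde{\tau}_1(\psi))
\]
forces $\widetilde{\tau}_1(\varphi\psi)=\widetilde{\tau}_1(\varphi)+\widetilde{\tau}_1(\psi)$, so the lift inherits the homomorphism property from $\tau_1$.

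There is no genuine obstacle in this lemma: the substantive content has already been accomplished in the two earlier lemmas. Specifically, Lemma \ref{lem:image of tau1} supplies the \emph{existence} of the lift by pinning down $\tau_1$ on a generating set (the $Y_{i;j}$ and the pushing classes $\iota(\gamma_i)$, which together with the other $Y$-homeomorphisms generate $\Gamma_2(N_g^*)$ via the forgetful sequence and Szepietowski's theorem), while Lemma \ref{lem:intersection of modules} supplies the \emph{uniqueness} by exhibiting a $\mathcal{M}(N_g)$-equivariant map $f$ distinguishing $(H^{\otimes 3})^{S_3}$ from $H\otimes\braket{\omega}$. Everything else is formal bookkeeping.
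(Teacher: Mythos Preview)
Your proposal is correct and follows exactly the route the paper takes: the paper's entire proof is the one-line remark ``By Lemma~\ref{lem:image of tau1} and Lemma~\ref{lem:intersection of modules}, we have:'', and you have simply spelled out the formal content of that sentence (existence from the image containment, uniqueness and the homomorphism property from injectivity of $j$). There is nothing to add or correct.
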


\section{A lower bound}
The goal of this section is to prove the following lemma,
which gives a lower bound of the dimension of $H_1(\Gamma_2(N_g);\mathbb{Z})$
as a $\mathbb{Z}/2\mathbb{Z}$-module.
\begin{lemma}\label{lemma:lower bound}
When $g\ge4$, 
\[
\dim_{\mathbb{Z}/2\mathbb{Z}} H_1(\Gamma_2(N_g);\mathbb{Z})\ge
\begin{pmatrix}
g\\
3
\end{pmatrix}+
\begin{pmatrix}
g\\
2
\end{pmatrix}.
\]
\end{lemma}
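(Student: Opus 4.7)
The plan is to use the mod~$2$ Johnson homomorphism $\tau_1\colon \Gamma_2(N_g^*)\to(H^{\otimes3})^{S_3}$ constructed in Section~3 to detect at least $\binom{g}{3}+\binom{g}{2}$ linearly independent classes in $H_1(\Gamma_2(N_g);\mathbb{Z})$. Since $\tau_1$ lives on the pointed mapping class group, the first task is to descend it to $\Gamma_2(N_g)$. The forgetful exact sequence restricts to $\iota(\pi)\to\Gamma_2(N_g^*)\to\Gamma_2(N_g)\to 1$ (using $\iota(\pi)\subseteq\Gamma_2(N_g^*)$), so by right-exactness of first homology, $H_1(\Gamma_2(N_g);\mathbb{Z})$ is the quotient of $H_1(\Gamma_2(N_g^*);\mathbb{Z})$ by the subgroup generated by the classes $[\iota(\gamma_i)]$ for $i=1,\ldots,g$. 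Let $U\subseteq(H^{\otimes3})^{S_3}$ be the $\mathbb{Z}/2\mathbb{Z}$-submodule generated by the elements $\tau_1(\iota(\gamma_i))=\sum_{j=1}^{g}S(C_j,C_i)$ computed in Lemma~\ref{lem:image of tau1}. Then $\tau_1$ factors through
\[
\bar\tau_1\colon H_1(\Gamma_2(N_g);\mathbb{Z})\longrightarrow (H^{\otimes3})^{S_3}/U,
\]
and the lemma will follow once I exhibit $\binom{g}{3}+\binom{g}{2}$ linearly independent elements in the image of $\bar\tau_1$.

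The second step is to evaluate $\bar\tau_1$ on the minimal generating set of Theorem~\ref{proposition:generators}. Lemma~\ref{lem:image of tau1} already supplies $\tau_1(Y_{i;j})=C_i^{\otimes3}+S(C_i,C_j)$ modulo~$2$. The missing input is $\tau_1(T_{i,j,k,l}^2)$; guided by the formula $\tau_1(T_{i,j}^2)=(C_i+C_j)^{\otimes3}$ I expect
\[
\tau_1(T_{i,j,k,l}^2)=(C_i+C_j+C_k+C_l)^{\otimes3}.
\]
This can be proved either by directly computing the action of $T_{\alpha_{i,j,k,l}}^{-2}$ on $\pi_1(N_g-\Int D,*)$ using the standard formula for a Dehn twist on a loop meeting the curve transversely, or, more algebraically, by invoking the chain relation $(abcde)^6=T_B$ from the proof of Theorem~\ref{proposition:generators} together with Proposition~\ref{prop:BCC map} (which gives $\bar\tau_1(T_B)=0$) and the conjugation identity $\tau_1(\varphi\psi\varphi^{-1})\equiv\varphi_*\tau_1(\psi)\pmod{2}$ valid for $\psi\in\Gamma_2(N_g^*)$ and $\varphi\in\mathcal{M}(N_g^*)$. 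Each conjugated factor in the expansion of $(abcde)^6$ reduces to some $(C_a+C_b)^{\otimes3}$ transported by the $H$-action of the conjugating Dehn twists, and summing the resulting contributions yields the conjectured formula.

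The final step is the linear independence count. Decompose $(H^{\otimes3})^{S_3}=V_1\oplus V_2$ as a $\mathbb{Z}/2\mathbb{Z}$-module, where $V_1$ is spanned by the triple symmetrizations $S(C_a,C_b,C_c)$ with $a<b<c$ and $V_2$ is spanned by $C_i^{\otimes3}$ together with $S(C_i,C_j)$ for $i\neq j$. Each generator of $U$ lies in $V_2$, so the quotient decomposes as $V_1\oplus(V_2/U)$. Projecting to $V_1$, the $Y$-generators vanish while $\bar\tau_1(T_{1,j,k,l}^2)$ becomes
\[
S(C_1,C_j,C_k)+S(C_1,C_j,C_l)+S(C_1,C_k,C_l)+S(C_j,C_k,C_l);
\]
these $\binom{g-1}{3}$ elements are linearly independent because $S(C_j,C_k,C_l)$ is the unique term avoiding the index~$1$. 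Projecting to $V_2/U$, a direct coefficient comparison against the defining relations $u_i=C_i^{\otimes3}+\sum_{k\neq i}S(C_k,C_i)$ of $U$ (reading off the coefficient of $S(C_g,C_b)$ forces all $\lambda_b=0$ for $b\neq g$, after which the remaining equations force the other parameters to vanish) shows that the $(g-1)^2$ vectors $C_i^{\otimes3}+S(C_i,C_j)$ with $i\in\{1,\ldots,g-1\}$ and $j\neq i$ are linearly independent in $V_2/U$. Combining the two projections yields $\binom{g-1}{3}+(g-1)^2=\binom{g}{3}+\binom{g}{2}$ linearly independent classes in the image of $\bar\tau_1$, proving the lower bound. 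The technical heart of the argument is the evaluation of $\tau_1$ on $T_{i,j,k,l}^2$; the chain-relation route is attractive since it reuses the bookkeeping already performed in Section~2.
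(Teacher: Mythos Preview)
Your overall strategy is sound and, once completed, gives a genuinely different proof from the paper's: the paper never evaluates $\tau_1$ on $T_{1,j,k,l}^2$ at all, but instead uses only $\tau_1(T_{i,i+1}^2)=X_i^{\otimes3}$ together with the transitivity of the $\mathcal{M}(N_g)$-action on $H_{\even}\setminus\{0\}$ to populate $\tau_1(\Gamma_2(N_g^*))$ with all cubes $X^{\otimes3}$, and then splits the count via the contraction $c:(H^{\otimes3})^{S_3}\to(H^{\otimes2})^{S_2}$ rather than your $V_1\oplus V_2$ decomposition. Your route has the pleasant feature of showing directly that the images of the minimal generators themselves are independent.

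However, your preferred justification of the key formula $\tau_1(T_{1,j,k,l}^2)=(C_1+C_j+C_k+C_l)^{\otimes3}$ via the chain relation is circular. In the expansion of $(abcde)^6$ in Section~2 the factor $e$ is $T_{1,j,k,l}$, so several of the conjugated squares---namely $bcdee\bar d\bar c\bar b$, $cdee\bar d\bar c$, $dee\bar d$ and $ee$ itself---are conjugates of $T_{1,j,k,l}^2$, not of any $T_{a,b}^2$. Thus the assertion that ``each conjugated factor \ldots\ reduces to some $(C_a+C_b)^{\otimes3}$'' is false, and the chain relation only expresses $\tau_1(T_{j,k,l,m}^2)$ in terms of the unknown $\tau_1(T_{1,j,k,l}^2)$. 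The direct computation you mention first does work; more cleanly, one can prove once and for all that $\tau_1(T_\gamma^2)=[\gamma]^{\otimes3}$ for every two-sided simple closed curve $\gamma$: using the description in Appendix~A, $T_\gamma^2(\alpha)\alpha^{-1}$ is a product of conjugates of $\gamma^{\pm2}$, so its class in $(\pi^2/[\pi,\pi^2])\otimes\mathbb{Z}/2\mathbb{Z}\cong(H^{\otimes2})^{S_2}/\braket{\omega}$ is $(\alpha\cdot\gamma)\,[\gamma]^{\otimes2}$, and since $C_i\cdot C_j=\delta_{ij}$ the resulting element of $H^{\otimes3}/(H\otimes\braket{\omega})$ is $[\gamma]^{\otimes3}$, which is already symmetric. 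With this in hand, your $V_1$- and $V_2/U$-independence arguments are correct and the count $\binom{g-1}{3}+(g-1)^2=\binom{g}{3}+\binom{g}{2}$ goes through.
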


By Lemma \ref{lemma:upper bound} and Lemma \ref{lemma:lower bound},
we can determine the dimension of 
the $\mathbb{Z}/2\mathbb{Z}$-module $H_1(\Gamma_2(N_g);\mathbb{Z})$,
and complete the proof of Theorem \ref{theorem:abel}.

Let us denote $H_{\even}=\Ker w_1$.
Define a homomorphism $c:H^{\otimes3}\to H^{\otimes 2}$ by
\[
c(X\otimes Y\otimes Z)=w_1(X)Y\otimes Z.
\]
\begin{lemma}
When $g\ge3$,
\[
\begin{CD}
0@>>>(H_{\even}^{\otimes3})^{S_3}@>>>(H^{\otimes3})^{S_3}@>c>>(H^{\otimes2})^{S_2}@>>>0
\end{CD}
\]
is exact.
\end{lemma}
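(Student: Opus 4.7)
The plan is to reduce the claim to a bookkeeping exercise in an explicit basis of $H$. Since $N_g$ is non-orientable, $w_1$ is nonzero, so I can pick $X_0 \in H$ with $w_1(X_0) = 1$ and extend it to a basis $\{X_0, E_2, \ldots, E_g\}$ of $H$ with $\{E_2, \ldots, E_g\}$ a basis of $H_{\even}$. Before the main computation, I would verify that $c$ actually maps $(H^{\otimes 3})^{S_3}$ into $(H^{\otimes 2})^{S_2}$: on a generic $S_3$-invariant,
\[
c(S(X,Y,Z)) = w_1(X)(Y \otimes Z + Z \otimes Y) + w_1(Y)(Z \otimes X + X \otimes Z) + w_1(Z)(X \otimes Y + Y \otimes X),
\]
which is manifestly $S_2$-invariant, with analogous checks on $S(X,Y)$ and $X^{\otimes 3}$. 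The inclusion $(H_{\even}^{\otimes 3})^{S_3} \subset \Ker c$ is then immediate because $w_1$ vanishes on $H_{\even}$.

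The heart of the argument is a direct identification of $\Ker c$ and $\Im c$ in the chosen basis. A standard basis of $(H^{\otimes 3})^{S_3}$ consists of the monomial symmetrizations indexed by size-$3$ multisets drawn from $\{X_0, E_2, \ldots, E_g\}$. I would partition this basis according to whether $X_0$ appears in the multiset: the $X_0$-free basis elements are exactly a basis of $(H_{\even}^{\otimes 3})^{S_3}$, while the $X_0$-containing basis splits into four families, namely $X_0^{\otimes 3}$, $S(E_i, X_0)$, $S(X_0, E_i)$, and $S(X_0, E_i, E_j)$ for $i < j$. Substituting into the displayed formula with $w_1(X_0) = 1$ and $w_1(E_i) = 0$, these map respectively to $X_0^{\otimes 2}$, $E_i^{\otimes 2}$, $X_0 \otimes E_i + E_i \otimes X_0$, and $E_i \otimes E_j + E_j \otimes E_i$, which constitute the standard basis of $(H^{\otimes 2})^{S_2}$.

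This yields all three conclusions at once: $c$ sends every $X_0$-free basis element to zero, so $\Ker c \supseteq (H_{\even}^{\otimes 3})^{S_3}$, while its restriction to the $X_0$-containing basis is a bijection onto a basis of $(H^{\otimes 2})^{S_2}$, whence $c$ is surjective and $\Ker c$ coincides with $(H_{\even}^{\otimes 3})^{S_3}$. There is no real obstacle beyond carefully organizing the $S_3$-orbits on the chosen basis of $H^{\otimes 3}$; once the table of images for the four $X_0$-containing families is laid out, exactness is immediate.
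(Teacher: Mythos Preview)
Your argument is correct and slightly more direct than the paper's. The paper works in the basis $\{C_i\}_{i=1}^g$ (where every $w_1(C_i)=1$), first shows surjectivity of $c$ by computing $c(C_i^{\otimes3})=C_i^{\otimes2}$ and $c(S(C_i,C_j))=C_i\otimes C_j+C_j\otimes C_i+C_i^{\otimes2}$, and then identifies $\Ker c$ with $(H_{\even}^{\otimes3})^{S_3}$ by a dimension count, explicitly evaluating $\dim(H^{\otimes3})^{S_3}$, $\dim(H^{\otimes2})^{S_2}$, and $\dim(H_{\even}^{\otimes3})^{S_3}$ as sums of binomial coefficients. Your choice of a basis adapted to the splitting $H=\langle X_0\rangle\oplus H_{\even}$ makes the map $c$ visibly block-triangular on the orbit-sum basis of $(H^{\otimes3})^{S_3}$: the $X_0$-free block is exactly $(H_{\even}^{\otimes3})^{S_3}$ and maps to zero, while the $X_0$-containing block maps bijectively onto the orbit-sum basis of $(H^{\otimes2})^{S_2}$. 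This gives surjectivity and the kernel simultaneously without any dimension bookkeeping, and as a bonus you explicitly verify that $c$ lands in the $S_2$-invariants, which the paper leaves implicit. The paper's approach has the minor advantage of staying in the basis $\{C_i\}$ used throughout the rest of the argument.
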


\begin{proof}
The set $\{C_i^{\otimes 2}\}_{i=1}^g\cup\{C_i\otimes C_j+C_j\otimes C_i\}_{1\le i<j\le g}$
is a basis of  $(H^{\otimes2})^{S_2}$.
Since 
\[
c(C_i^{\otimes3})=C_i^{\otimes2},\ c(S(C_i,C_j))=C_i\otimes C_j+C_j\otimes C_i+C_i^{\otimes 2},
\]
we have $c((H^{\otimes3})^{S_3})=(H^{\otimes2})^{S_2}$.

Next, we consider $\Ker c$.
It is clear that $(H_{\even}^{\otimes3})^{S_3}\subset\Ker c$.
For $i=1,2,\cdots,g-1$, let us denote $X_i=C_i+C_{i+1}$.
Since $H_{\even}$ is a free $\mathbb{Z}/2\mathbb{Z}$-module generated by $\{X_i\}_{i=1}^{g-1}$,
the set
\[
\{X_i^{\otimes3}\}_{i=1}^{g-1}\cup
\{S(X_i,X_j)\}_{\begin{subarray}{c}1\le i\le g-1\\1\le j\le g-1\\i\ne j\end{subarray}}\cup
\{S(X_i,X_j,X_k)\}_{1\le i<j<k\le g-1}
\]
is a basis of $(H_{\even}^{\otimes3})^{S_3}$.
In particular,
\[
\dim_{\mathbb{Z}/2\mathbb{Z}} (H_{\even}^{\otimes3})^{S_3}
=\binom{g-1}{3}+2\binom{g-1}{2}+\binom{g-1}{1}.
\]
In the same way, we have
\[
\dim_{\mathbb{Z}/2\mathbb{Z}} (H^{\otimes3})^{S_3}
=\binom{g}{3}+2\binom{g}{2}+\binom{g}{1}.
\]
and 
\[
\dim_{\mathbb{Z}/2\mathbb{Z}} (H^{\otimes2})^{S_2}
=\binom{g}{2}+\binom{g}{1}.
\]
Thus, we have
\[
\dim_{\mathbb{Z}/2\mathbb{Z}}\Ker c
=\dim_{\mathbb{Z}/2\mathbb{Z}} (H^{\otimes3})^{S_3}
-\dim_{\mathbb{Z}/2\mathbb{Z}} (H^{\otimes2})^{S_2}
=\binom{g-1}{3}+2\binom{g-1}{2}+\binom{g-1}{1}.
\]
Since the dimensions of $\Ker c$ and $(H_{\even}^{\otimes3})^{S_3}$ are equal,
we have $\Ker c=(H_{\even}^{\otimes3})^{S_3}$.
\end{proof}

\begin{lemma}\label{lemma:image of johnson}
When $g\ge4$, 
\[
\dim_{\mathbb{Z}/2\mathbb{Z}} \tau_1(\Gamma_2(N_g^*))
\ge\binom{g}{3}+\binom{g}{2}+\binom{g}{1}.
\]
\end{lemma}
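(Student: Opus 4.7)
The plan is to exploit the short exact sequence
\[
0 \to (H_{\even}^{\otimes 3})^{S_3} \to (H^{\otimes 3})^{S_3} \xrightarrow{c} (H^{\otimes 2})^{S_2} \to 0
\]
established in the preceding lemma. Writing $V := \tau_1(\Gamma_2(N_g^*))$, restricting $c$ to $V$ yields
\[
\dim_{\mathbb{Z}/2\mathbb{Z}} V = \dim c(V) + \dim(V \cap \ker c),
\]
and I will separately lower-bound the two summands by $\binom{g}{2}+1$ and $\binom{g}{3}+g-1$.

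For the first summand, I compute from Lemma \ref{lem:image of tau1} that $c(\tau_1(Y_{i;j})) = C_i\otimes C_j + C_j\otimes C_i$ for $i\ne j$, which produces $\binom{g}{2}$ independent elements of $(H^{\otimes 2})^{S_2}$, and that $c(\tau_1(\iota(\gamma_i))) = \omega + \sum_{j\ne i}(C_i\otimes C_j + C_j\otimes C_i)$. Subtracting off the contribution already coming from the $Y_{i;j}$ leaves $\omega$ as an additional independent element, so $\dim c(V)\ge\binom{g}{2}+1$.

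For the second summand, I fix the basis $D_i := C_1+C_{i+1}$ ($1\le i\le g-1$) of $H_{\even}$ and exhibit three families of cubes lying in $V\cap\ker c$: the singles $D_i^{\otimes 3} = (C_1+C_{i+1})^{\otimes 3} = \tau_1(T^2_{1,i+1})$, the pairs $(D_i+D_j)^{\otimes 3} = (C_{i+1}+C_{j+1})^{\otimes 3} = \tau_1(T^2_{i+1,j+1})$, and the triples $(D_i+D_j+D_k)^{\otimes 3} = (C_1+C_{i+1}+C_{j+1}+C_{k+1})^{\otimes 3} = \tau_1(T^2_{1,i+1,j+1,k+1})$. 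The characteristic-$2$ identities
\[
(D_i+D_j)^{\otimes 3} = D_i^{\otimes 3}+D_j^{\otimes 3}+S(D_i,D_j)+S(D_j,D_i)
\]
and
\[
(D_i+D_j+D_k)^{\otimes 3} = \sum_{a\in\{i,j,k\}} D_a^{\otimes 3} + \sum_{\substack{a,b\in\{i,j,k\}\\ a\ne b}} S(D_a,D_b) + S(D_i,D_j,D_k)
\]
then imply that $V\cap\ker c$ contains the family $\{D_i^{\otimes 3}\}_i \cup \{S(D_i,D_j)+S(D_j,D_i):i<j\} \cup \{S(D_i,D_j,D_k):i<j<k\}$, which is linearly independent since its elements extend to a basis of $(H_{\even}^{\otimes 3})^{S_3}$. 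Its cardinality is $(g-1)+\binom{g-1}{2}+\binom{g-1}{3} = \binom{g}{3}+g-1$ by Pascal's identity, and summing the two bounds yields the lemma.

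The main obstacle is establishing the formula $\tau_1(T^2_{1,i+1,j+1,k+1}) = (C_1+C_{i+1}+C_{j+1}+C_{k+1})^{\otimes 3}$, since Lemma \ref{lem:image of tau1}(ii) only treats $|I|=2$ and the triple cubes are what provide the $\binom{g-1}{3}$ term. This should follow from the general identity $\tau_1(T^2_\alpha) = [\alpha]^{\otimes 3}$ for every two-sided simple closed curve $\alpha$: for a loop $\gamma$ meeting $\alpha$ once mod~$2$ one has $T_\alpha^{-2}(\gamma) = \gamma\cdot\alpha^{-2}$ up to conjugation, so the Magnus computation gives $\bar\theta_2(T_\alpha^{-2}(\gamma)) = \bar\theta_2(\gamma)+C_\alpha^{\otimes 2}$, whence $\tau_1(T^2_\alpha)(\gamma) = (C_\gamma\cdot C_\alpha)\,C_\alpha^{\otimes 2}$, which corresponds to $C_\alpha^{\otimes 3}$ under Poincar\'e duality and exactly matches the $|I|=2$ formula already proved.
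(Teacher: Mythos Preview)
Your proof is correct and follows the same architecture as the paper: split $V=\tau_1(\Gamma_2(N_g^*))$ via the contraction $c$, bound $\dim c(V)\ge\binom{g}{2}+1$ using $Y_{i;j}$ and a pushing map, and bound $\dim(V\cap\Ker c)\ge\binom{g-1}{1}+\binom{g-1}{2}+\binom{g-1}{3}$ by producing enough cubes $X^{\otimes 3}$ with $X\in H_{\even}$.

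The one substantive difference is how the cubes are placed in the image. You name explicit mapping classes: the basis $D_i=C_1+C_{i+1}$ is chosen so that $D_i+D_j=C_{i+1}+C_{j+1}$ is handled by the already-known formula $\tau_1(T^2_{a,b})=(C_a+C_b)^{\otimes 3}$, and only the triples $D_i+D_j+D_k$ require the four-index twist $T^2_{1,i+1,j+1,k+1}$ together with the general identity $\tau_1(T^2_\alpha)=[\alpha]^{\otimes 3}$. The paper instead works with $X_i=C_i+C_{i+1}$ and argues abstractly: since $\tau_1$ is $\mathcal{M}(N_g^*)$-equivariant and $\mathcal{M}(N_g)$ acts transitively on $H_{\even}\setminus\{0\}$, the single computation $\tau_1(T^2_{i,i+1})=X_i^{\otimes 3}$ already forces $X^{\otimes 3}\in V$ for \emph{every} nonzero $X\in H_{\even}$, with no further twist formula needed. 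Your route is more constructive; the paper's is slicker and avoids the extra computation.

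Your sketch of $\tau_1(T^2_\alpha)=[\alpha]^{\otimes 3}$ is right in outcome but loose as written: $\bar\theta_2$ is not conjugation-invariant, so ``up to conjugation'' is not enough. One should write $T_\alpha^{-2}(\gamma)=\tilde\alpha^{\pm 2}\gamma$ in $\pi$ for a based representative $\tilde\alpha$ and compute $\bar\theta_2(\tilde\alpha^{\pm 2}\gamma)-\bar\theta_2(\gamma)=[\alpha]^{\otimes 2}$ directly; and the formula $\tau_1(T^2_\alpha)(\gamma)=([\gamma]\cdot[\alpha])\,[\alpha]^{\otimes 2}$ must be checked for all $\gamma$ in a basis of $H$, including those with even geometric intersection with $\alpha$, not only those meeting $\alpha$ once.
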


\begin{proof}
In the proof of Lemma \ref{lem:image of tau1},
we have $\tau_1(T_{i,i+1}^2)=X_i^{\otimes3}$.
Since $\mathcal{M}(N_g)$ acts on $H_{\even}-\{0\}$ transitively
and $\tau_1$ is $\mathcal{M}(N_g^*)$-equivariant, 
we have $X^{\otimes 3}\in \tau_1(\Gamma_2(N_g^*))$ for any $X\in H_{\even}$.

For $i,j,k\in\{1,2,\cdots,g-1\}$, we have
\begin{align*}
(X_i+X_j)^{\otimes 3}&=X_i^{\otimes 3}+X_j^{\otimes 3}+S(X_i,X_j)+S(X_j,X_i)\\
(X_i+X_j+X_k)^{\otimes 3}&=X_i^{\otimes 3}+X_j^{\otimes 3}+X_k^{\otimes 3}+S(X_i,X_j)+S(X_j,X_i)\\
&\quad+S(X_j,X_k)+S(X_k,X_j)+S(X_k,X_i)+S(X_i,X_k)+S(X_i,X_j,X_k).
\end{align*}
Thus, we obtain $S(X_i,X_j)+S(X_j,X_i), S(X_i,X_j,X_k)\in  \tau_1(\Gamma_2(N_g^*))$.
It implies
\[
\dim_{\mathbb{Z}/2\mathbb{Z}}(\tau_1(\Gamma_2(N_g^*))\cap\Ker c)\ge 
\binom{g-1}{3}+\binom{g-1}{2}+\binom{g-1}{1}.
\]
On the other hand, we have
\[
c\tau_1(Y_{i;j})=C_i\otimes C_j+C_j\otimes C_i,\quad
c\tau_1(\iota(\gamma_1))=\omega+\sum_{i=1}^g(C_i\otimes C_1+C_1\otimes C_i).
\]
It shows
\[
c\tau_1(\Gamma_2(N_g^*))
=\braket{\{C_i\otimes C_j+C_i\otimes C_j\}_{1\le i<j\le g},\ \omega}
\subset (H^{\otimes 2})^{S_2}.
\]
Thus, we obtain
\[
\dim_{\mathbb{Z}/2\mathbb{Z}}c\tau_1(\Gamma_2(N_g^*))\ge
\binom{g}{2}+1.
\]
The equation
\[
\dim_{\mathbb{Z}/2\mathbb{Z}}  \tau_1(\Gamma_2(N_g^*))
=\dim_{\mathbb{Z}/2\mathbb{Z}}( \tau_1(\Gamma_2(N_g^*))\cap\Ker c)
+\dim_{\mathbb{Z}/2\mathbb{Z}} c\tau_1(\Gamma_2(N_g^*))
\]
implies what we want.
\end{proof}

We can now prove Lemma \ref{lemma:lower bound}.
\begin{proof}[Proof of Lemma \ref{lemma:lower bound}]
By Lemma \ref{lemma:image of johnson},
we have
\[
\dim_{\mathbb{Z}/2\mathbb{Z}} H_1(\Gamma_2(N_g^*);\mathbb{Z}/2\mathbb{Z})
\ge\binom{g}{3}+\binom{g}{2}+\binom{g}{1}.
\]

The forgetful exact sequence $\pi \to \Gamma_2(N_g^*)\to \Gamma_2(N_g)\to 1$
induces the exact sequence
\[
\begin{CD}
H_1(N_g;\mathbb{Z}/2\mathbb{Z})@>>>
H_1(\Gamma_2(N_g^*);\mathbb{Z}/2\mathbb{Z})@>>>
H_1(\Gamma_2(N_g);\mathbb{Z}/2\mathbb{Z})@>>>0.
\end{CD}
\]
They imply
\[
\dim_{\mathbb{Z}/2\mathbb{Z}} H_1(\Gamma_2(N_g);\mathbb{Z}/2\mathbb{Z})
\ge\binom{g}{3}+\binom{g}{2}.
\]
\end{proof}

In particular,
by Theorem \ref{theorem:abel} and Lemma \ref{lemma:image of johnson},
we obtain the following corollary.
\begin{corollary}
When $g\ge 4$, the induced homomorphism
\[
(\tau_1)_*:H_1(\Gamma_2(N_g^*);\mathbb{Z})\to (H^{\otimes 3})^{S_3}
\]
is injective.
\end{corollary}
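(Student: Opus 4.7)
The plan is a dimension count combining Theorem \ref{theorem:abel} with Lemma \ref{lemma:image of johnson}. Since $(H^{\otimes 3})^{S_3}$ is a $\mathbb{Z}/2\mathbb{Z}$-vector space, the homomorphism $(\tau_1)_*$ factors through the reduction
\[
H_1(\Gamma_2(N_g^*);\mathbb{Z})\twoheadrightarrow H_1(\Gamma_2(N_g^*);\mathbb{Z}/2\mathbb{Z})\xrightarrow{\overline{(\tau_1)_*}}(H^{\otimes 3})^{S_3},
\]
using universal coefficients (the $\mathrm{Tor}$ term vanishes since $H_0$ is free) to identify $H_1\otimes\mathbb{Z}/2\mathbb{Z}$ with $H_1(-;\mathbb{Z}/2\mathbb{Z})$. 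The core step is to show $\overline{(\tau_1)_*}$ is injective.

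To do this, I would apply the five-term $\mathbb{Z}/2\mathbb{Z}$-homology exact sequence of the forgetful short exact sequence $1\to\pi\to\Gamma_2(N_g^*)\to\Gamma_2(N_g)\to 1$ (short exact because $\iota$ is injective for $g\ge 2$):
\[
H_1(\pi;\mathbb{Z}/2\mathbb{Z})\to H_1(\Gamma_2(N_g^*);\mathbb{Z}/2\mathbb{Z})\to H_1(\Gamma_2(N_g);\mathbb{Z}/2\mathbb{Z})\to 0.
\]
By Theorem \ref{theorem:abel} the right-hand term has dimension $\binom{g}{3}+\binom{g}{2}$, and $\dim H_1(\pi;\mathbb{Z}/2\mathbb{Z})=\binom{g}{1}$, hence
\[
\dim_{\mathbb{Z}/2\mathbb{Z}} H_1(\Gamma_2(N_g^*);\mathbb{Z}/2\mathbb{Z})\le \binom{g}{3}+\binom{g}{2}+\binom{g}{1}.
\]
On the other hand, Lemma \ref{lemma:image of johnson} says the image of $\tau_1$---which equals the image of $\overline{(\tau_1)_*}$---has $\mathbb{Z}/2\mathbb{Z}$-dimension at least this same quantity. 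The two numbers coincide, which forces $\overline{(\tau_1)_*}$ to be injective.

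The main obstacle is upgrading this mod-$2$ injectivity to the integer statement: one needs $H_1(\Gamma_2(N_g^*);\mathbb{Z})$ to be a $\mathbb{Z}/2\mathbb{Z}$-module, so that the surjection onto its mod-$2$ reduction is an isomorphism. I would deduce this from the integer five-term sequence of the forgetful extension. The quotient $H_1(\Gamma_2(N_g);\mathbb{Z})$ is already a $\mathbb{Z}/2\mathbb{Z}$-module by Theorem \ref{theorem:abel}; the kernel is the image of the $\Gamma_2(N_g)$-coinvariants of $H_1(N_g;\mathbb{Z})=\mathbb{Z}^{g-1}\oplus\mathbb{Z}/2\mathbb{Z}$. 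Since $\Gamma_2(N_g)$ acts on $H_1(N_g;\mathbb{Z})$ by matrices congruent to the identity mod $2$, and since $T_{i,j}^2\in\Gamma_2(N_g)$ realizes the transvection $x\mapsto x+2(x\cdot[\alpha_{i,j}])[\alpha_{i,j}]$, the relations generate $2$ times the free part, so these coinvariants are annihilated by $2$. An extension argument---using that the saturation of the mod-$2$ five-term sequence implies the integer sequence already has the minimal possible kernel---then shows $H_1(\Gamma_2(N_g^*);\mathbb{Z})$ is a $\mathbb{Z}/2\mathbb{Z}$-module, and injectivity of $(\tau_1)_*$ follows from injectivity of $\overline{(\tau_1)_*}$.
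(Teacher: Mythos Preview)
Your dimension count via Theorem~\ref{theorem:abel} and Lemma~\ref{lemma:image of johnson} is exactly the paper's intended argument, and you are right to flag the passage from $\mathbb{Z}/2\mathbb{Z}$ to $\mathbb{Z}$ coefficients, which the paper leaves implicit.

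Two points in your upgrade step need correction. First, the formula $x\mapsto x+2(x\cdot[\alpha_{i,j}])[\alpha_{i,j}]$ is ill-posed on a non-orientable surface (there is no $\mathbb{Z}$-valued intersection form), and in fact $T_{i,j}^2$ acts on $H_1(N_g;\mathbb{Z})$ by $C_i\mapsto -C_i-2C_j$, $C_j\mapsto 2C_i+3C_j$; the image of $T_{i,j}^2-1$ is only $\langle 2(C_i+C_j)\rangle$, and for even $g$ these alone do \emph{not} make the coinvariants $2$-torsion (for instance $C_1$ has order $4$ in $\mathbb{Z}^g/\langle 2(C_i+C_j)\rangle$). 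Use the $Y$-homeomorphisms instead: from the formulas in the proof of Lemma~\ref{lem:image of tau1}, $Y_{i;j}$ sends $C_i\mapsto -C_i$, so $(Y_{i;j}-1)C_i=-2C_i$ forces $2C_i=0$ in the coinvariants for $1\le i\le g-1$, and then $2C_g=0$ follows from the relation $2\sum_kC_k=0$ in $H_1(N_g;\mathbb{Z})$. Second, your final ``extension argument'' is too loose as stated, since an extension of $2$-torsion by $2$-torsion can be $\mathbb{Z}/4\mathbb{Z}$. What actually closes the gap is an order count: once the coinvariants are $2$-torsion of dimension at most $g$, the integral exact sequence gives $\lvert H_1(\Gamma_2(N_g^*);\mathbb{Z})\rvert\le 2^g\cdot 2^{\binom{g}{3}+\binom{g}{2}}$, while your mod-$2$ step already shows that $\dim\bigl(H_1(\Gamma_2(N_g^*);\mathbb{Z})\otimes\mathbb{Z}/2\mathbb{Z}\bigr)$ equals this same exponent; since the group is finitely generated and $4$-torsion, comparing orders forces it to be a $\mathbb{Z}/2\mathbb{Z}$-module, and then injectivity of $\overline{(\tau_1)_*}$ gives injectivity of $(\tau_1)_*$.
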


\appendix
\section{Another definition of $\tau_1$}
We give another definition of the mod $2$ Johnson homomorphism
in the same manner as original one given by Johnson \cite{Johnson}.
\begin{lemma}\label{lemma:mod2coinv}
For $g\ge1$,
\[
\frac{\pi^2}{[\pi,\pi^2]}\otimes\mathbb{Z}/2\mathbb{Z}
\cong \frac{(H^{\otimes2})^{S_2}}{\braket{\omega}}
\]
as an $\mathcal{M}(N_g)$-module.
\end{lemma}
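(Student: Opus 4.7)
The plan is to construct mutually inverse $\mathcal{M}(N_g)$-equivariant homomorphisms, using the Magnus expansion in one direction and an explicit definition on a basis in the other.

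In one direction, let $K=\Ker(\pi'\to H)$. I extract from the Magnus expansion $\theta:\pi'\to 1+\hat T_1$ the map $\Theta_2:K\to H^{\otimes 2}$ sending $w$ to the degree-$2$ component of $\theta(w)-1$. A direct computation on generators gives (mod $2$) that $\Theta_2(x^2)=\bar x\otimes\bar x$ and $\Theta_2([x,y])=\bar x\otimes\bar y+\bar y\otimes\bar x$, so the image actually lies in $(H^{\otimes 2})^{S_2}$. Expanding $\theta(g)\theta(w)\theta(g)^{-1}$ and using that the degree-$1$ component of $\theta(w)$ vanishes shows $\Theta_2(gwg^{-1})\equiv\Theta_2(w)\pmod 2$, so $\Theta_2$ descends to $K/[\pi',K]\otimes\mathbb{Z}/2\mathbb{Z}$. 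Since the surface relator $r=\prod_{i=1}^g\gamma_i^2$ satisfies $\Theta_2(r)=\omega$ and normally generates $\Ker(\pi'\to\pi)$, passing to $\pi$ yields a homomorphism
\[
\phi:\pi^2/[\pi,\pi^2]\otimes\mathbb{Z}/2\mathbb{Z}\to (H^{\otimes 2})^{S_2}/\braket{\omega}.
\]

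In the other direction, I define $\psi$ on the basis $\{C_i\otimes C_i\}\cup\{C_i\otimes C_j+C_j\otimes C_i\colon i<j\}$ of $(H^{\otimes 2})^{S_2}$ by $\psi(C_i\otimes C_i)=[\gamma_i^2]$ and $\psi(C_i\otimes C_j+C_j\otimes C_i)=[[\gamma_i,\gamma_j]]$. Well-definedness modulo $\braket{\omega}$ requires $\psi(\omega)=[\prod_i\gamma_i^2]=[r]=0$, which is immediate since $r=1$ in $\pi$. The composition $\phi\circ\psi$ is the identity on basis elements. To see $\psi$ is surjective, I invoke the two-step nilpotent identities
\[
(ab)^2\equiv a^2+b^2+[b,a],\qquad [ab,c]\equiv [a,c]+[b,c] \pmod{[\pi,\pi^2]},
\]
valid since $\pi/[\pi,\pi^2]$ is a quotient of the $2$-step nilpotent quotient of $\pi$; these let me rewrite the class of any element of $\pi^2$ as a sum of classes of $\gamma_i^2$ and $[\gamma_i,\gamma_j]$. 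The $\mathcal{M}(N_g)$-equivariance is then automatic from the naturality of $\theta$ together with the fact that $\omega$ (Poincar\'e dual to the fundamental class mod $2$) is $\mathcal{M}(N_g)$-invariant.

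The main obstacle will be tracking the second-order terms of the Magnus expansion carefully enough to verify both $\Theta_2([x,y])=\bar x\otimes\bar y+\bar y\otimes\bar x$ and the conjugation-invariance $\Theta_2(gwg^{-1})\equiv\Theta_2(w)\pmod 2$; once these two computations are in hand, the descent from $\pi'$ to $\pi$ and the surjectivity of $\psi$ via the nilpotent identities above are routine manipulations.
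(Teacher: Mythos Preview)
Your argument is correct and gives a genuinely different proof from the paper's. The paper proceeds homologically: it applies the five-term exact sequence in $\mathbb{Z}/2\mathbb{Z}$-homology to $1\to\pi^2\to\pi\to H\to1$, identifies $H_2(H;\mathbb{Z}/2\mathbb{Z})\cong(H^{\otimes2})^{S_2}$ via K\"unneth, and then uses a Hopf-formula computation to show that the image of $H_2(\pi;\mathbb{Z}/2\mathbb{Z})$ is exactly $\braket{\omega}$. Your approach instead builds the isomorphism by hand, extracting $\phi$ from the degree-$2$ term of the Magnus expansion and writing down an explicit inverse $\psi$ on a basis. The paper's route makes $\mathcal{M}(N_g)$-equivariance automatic (every step is functorial) and generalizes cleanly; your route produces the concrete formula $[x^2]\mapsto[x]^{\otimes2}$ directly, which is precisely what the paper records separately in the Remark following the lemma, and it meshes nicely with the Magnus-expansion framework already used for $\tau_1$.

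One phrasing to tighten: you justify equivariance by ``the naturality of $\theta$'', but the standard Magnus expansion is \emph{not} $\mathcal{M}(N_g^*)$-equivariant (indeed, its failure to be so is what gives rise to $\tau_1$). What is true, and what you have effectively shown, is that over $\mathbb{Z}/2\mathbb{Z}$ the map $\phi$ is characterized by the basis-free rule $\phi([x^2])=[x]\otimes[x]$ for all $x\in\pi$; this description is manifestly equivariant, and together with the $\mathcal{M}(N_g)$-invariance of $\omega$ it gives the module isomorphism. Stating it this way closes the only soft spot in the write-up.
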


\begin{proof}
The exact sequence
\[
\begin{CD}
1@>>>\pi^2@>>>\pi@>>>H@>>>1
\end{CD}
\]
induces the five term exact sequence between their homology groups
with $\mathbb{Z}/2\mathbb{Z}$-coefficients.
Thus, we have
\[
\begin{CD}
H_2(\pi;\mathbb{Z}/2\mathbb{Z})
@>>>H_2(H;\mathbb{Z}/2\mathbb{Z})
@>>>H_1(\pi^2;\mathbb{Z}/2\mathbb{Z})_{\pi}@>>>0.
\end{CD}
\]
Since we have
\[
H_1(\pi^2;\mathbb{Z}/2\mathbb{Z})_{\pi}
=H_1(\pi^2;\mathbb{Z})_{\pi}\otimes\mathbb{Z}/2\mathbb{Z}
=\frac{\pi^2}{[\pi,\pi^2]}\otimes\mathbb{Z}/2\mathbb{Z},
\]
it suffices to show that
\[
\Coker(H_2(\pi;\mathbb{Z}/2\mathbb{Z})\to H_2(H;\mathbb{Z}/2\mathbb{Z}))
\cong \frac{(H^{\otimes2})^{S_2}}{\braket{\omega}}.
\]
By K\"{u}nneth formula,
we have $H_2(H;\mathbb{Z}/2\mathbb{Z})=(H^{\otimes2})^{S_2}$.
Actually, this is an $\mathcal{M}(N_g)$-module isomorphism.
Let $G$ be an arbitrary group defined by the exact sequence
\[
\begin{CD}
1@>>> R@>>> F@>>> G@>>> 1,
\end{CD}
\]
where $F$ and $R$ are the sets of generators and relators, respectively.
If we consider the five term exact sequence
between their homology groups with $\mathbb{Z}/2\mathbb{Z}$-coefficients,
we have
\[
H_2(G;\mathbb{Z}/2\mathbb{Z})
\cong \Ker(H_1(R;\mathbb{Z}/2\mathbb{Z})_F\to H_1(F;\mathbb{Z}/2\mathbb{Z}))
\cong \frac{F^2\cap R}{[F,R]R^2}.
\]
This can be considered as the Hopf formula with $\mathbb{Z}/2\mathbb{Z}$-coefficients.
Recall that $\pi'$ is a free group,
and $\Ker(\pi'\to\pi)$ is normally generated by the only one relator $r=\prod_{i=1}^g\gamma_i^2$.
If we put $F=\pi'$ and $G=\pi$,
the cycle
\[
\sum_{i=1}^g[\gamma_i|\gamma_i]
+\sum_{i=1}^{g-1}[\gamma_1^2\cdots\gamma_i^2|\gamma_{i+1}^2]
\]
represents a generator of $H_2(\pi;\mathbb{Z}/2\mathbb{Z})$
corresponds to $r$.
Under the homomorphism
$H_2(\pi;\mathbb{Z}/2\mathbb{Z})\to H_2(H;\mathbb{Z}/2\mathbb{Z})$,
it maps to $\omega\in H^{\otimes 2}$ . 
Thus, we have obtained the isomorphism as stated.
\end{proof}

\begin{remark}
The isomorphism
\[
(\pi^2/[\pi,\pi^2])\otimes\mathbb{Z}/2\mathbb{Z}\to (H^{\otimes2})^{S_2}/\braket{\omega}
\]
maps $[x^2]$ to $[x]^{\otimes 2}$ for $x\in\pi$,
where $[x^2]$ is the element in $\pi^2/[\pi,\pi^2]\otimes\mathbb{Z}/2\mathbb{Z}$ represented by $x^2\in\pi^2$,
and $[x]\in H$ is the first homology class represented by $x\in\pi$.
\end{remark}

For $\varphi\in\Gamma_2(N_g^*)$, 
define a map $\tau_1(\varphi):\pi\to(\pi^2/[\pi,\pi^2])\otimes\mathbb{Z}/2\mathbb{Z}$ by
$\tau_1(\varphi)(\alpha)=[\varphi(\alpha)\alpha^{-1}]$.
The map $\tau_1(\varphi)$ is a homomorphism, because for $\alpha,\beta\in\pi$, we have
\[
\tau_1(\varphi)(\alpha\beta)
=[\varphi(\alpha\beta)\beta^{-1}\alpha^{-1}]
=[\varphi(\alpha)\alpha^{-1}]+[\alpha\varphi(\beta)\beta^{-1}\alpha^{-1}]
=\tau_1(\varphi)(\alpha)+\tau_1(\varphi)(\beta).
\]

We obtain the same homomorphism as in Lemma \ref{lem:mod2 Johnson} as follows.
\begin{lemma}
For $g\ge1$, the map
\[
\tau_1:\Gamma_2(N_g^*)\to \Hom\left(\pi,\frac{\pi^2}{[\pi,\pi^2]}\otimes\mathbb{Z}/2\mathbb{Z}\right)
\]
is an $\mathcal{M}(N_g^*)$-equivariant homomorphism.
\end{lemma}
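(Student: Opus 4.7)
The plan is to verify in turn that $\tau_1$ lands in the claimed module, that $\tau_1$ is an honest group homomorphism, and that it is $\mathcal{M}(N_g^*)$-equivariant. Well-definedness is immediate: if $\varphi \in \Gamma_2(N_g^*)$, then by hypothesis $\varphi$ acts trivially on $H = \pi/\pi^2$, which is exactly the statement that $\varphi(\alpha)\alpha^{-1} \in \pi^2$ for every $\alpha \in \pi$. Hence $\tau_1(\varphi)(\alpha)$ makes sense as an element of $M := (\pi^2/[\pi,\pi^2]) \otimes \mathbb{Z}/2\mathbb{Z}$. That $\tau_1(\varphi)$ itself is a homomorphism $\pi \to M$ has already been checked in the paragraph just before the lemma.

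For the homomorphism property of $\tau_1$ itself, I would use the paper's convention $(\varphi\psi)(\alpha) = \psi(\varphi(\alpha))$ and rewrite
\[
\varphi\psi(\alpha)\cdot\alpha^{-1} \;=\; \psi(\varphi(\alpha)\alpha^{-1})\cdot \psi(\alpha)\alpha^{-1}
\]
in $\pi$. Projecting to $M$ yields the crossed-homomorphism formula $\tau_1(\varphi\psi)(\alpha) = \psi_*\tau_1(\varphi)(\alpha) + \tau_1(\psi)(\alpha)$. The crux of the argument, and the step I expect to be the main obstacle, is reducing this crossed identity to an honest homomorphism. This rests entirely on Lemma \ref{lemma:mod2coinv}: the $\mathcal{M}(N_g)$-action on $M$ agrees with the action on $(H^{\otimes 2})^{S_2}/\braket{\omega}$, and hence factors through the mod $2$ action on $H$. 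Since $\psi \in \Gamma_2(N_g^*)$ acts trivially on $H$, the operator $\psi_*$ on $M$ is trivial, and the formula collapses to $\tau_1(\varphi\psi) = \tau_1(\varphi) + \tau_1(\psi)$.

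For $\mathcal{M}(N_g^*)$-equivariance, I would conjugate and use that any $\eta \in \mathcal{M}(N_g^*)$ is a group automorphism of $\pi$. Writing out $(\eta^{-1}\varphi\eta)(\alpha)$ under the paper's convention and then absorbing the outer $\eta$ via the automorphism identity $\eta(\xi)\eta(\xi')^{-1} = \eta(\xi \xi'^{-1})$ rearranges the defining formula as $\tau_1(\eta^{-1}\varphi\eta)(\alpha) = \eta_*\bigl(\tau_1(\varphi)(\eta^{-1}\alpha)\bigr)$, which is precisely the defining formula for the $\mathcal{M}(N_g^*)$-action on $\Hom(\pi, M)$.

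Finally, to match this $\tau_1$ with the one in Lemma \ref{lem:mod2 Johnson}---the headline claim of the appendix---one post-composes with the isomorphism $M \cong (H^{\otimes 2})^{S_2}/\braket{\omega}$ of Lemma \ref{lemma:mod2coinv}, which by the preceding remark sends $[\alpha^2]$ to $[\alpha]^{\otimes 2}$. A short Magnus-expansion calculation (extracting the degree $2$ part of $\theta(\varphi(\alpha)\alpha^{-1})$) then shows the two definitions agree. The only subtle bookkeeping is that the original formula uses $\varphi^{-1}$ whereas the new one uses $\varphi$, so one must be careful with the composition convention; this is routine but easy to misread.
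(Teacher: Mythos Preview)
Your proposal is correct and follows essentially the same approach as the paper: decompose $(\varphi\psi)(\alpha)\alpha^{-1}$ to obtain a crossed-homomorphism identity, then invoke Lemma~\ref{lemma:mod2coinv} to conclude that $\Gamma_2(N_g^*)$ acts trivially on $M$, collapsing the crossed identity to a genuine homomorphism. The paper's own proof is terser---it records only the splitting $\tau_1(\varphi\psi)(\alpha)=[\varphi(\psi(\alpha)\alpha^{-1})]+[\varphi(\alpha)\alpha^{-1}]$ and cites Johnson's Lemma~2D for equivariance---whereas you spell out the equivariance computation explicitly and add the final paragraph matching the two definitions of $\tau_1$, which the paper asserts but does not argue in detail.
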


\begin{proof}
The proof is the same as that of \cite[Lemma 2C and 2D]{Johnson}.
For $\varphi,\psi\in\Gamma_2(N_g^*)$,
\[
\tau_1(\varphi\psi)(\alpha)
=[\varphi(\psi(\alpha)\alpha^{-1})]+[\varphi(\alpha)\alpha^{-1}].
\]
Since the isomorphism in Lemma \ref{lemma:mod2coinv}
is $\mathcal{M}(N_g)$-equivariant,
$\Gamma_2(N_g^*)$ acts on $(\pi^2/[\pi,\pi^2])\otimes\mathbb{Z}/2\mathbb{Z}$ trivially.
Thus, we have $[\varphi(\psi(\alpha)\alpha^{-1})]=[\psi(\alpha)\alpha^{-1}]$.
Hence we obtain
\[
\tau_1(\varphi\psi)(\alpha)=\tau_1(\psi)(\alpha)+\tau_1(\varphi)(\alpha).
\]

In the same way as \cite[Lemma 2D]{Johnson},
we can also show that it is $\mathcal{M}(N_g^*)$-equivariant.
\end{proof}

\end{document}